\documentclass[10pt, reqno]{amsart} 

\usepackage{amsmath, amsthm, amssymb, amsfonts}
\usepackage{mathrsfs}
\usepackage{enumerate}
\usepackage{enumitem}
\usepackage{graphicx}
\usepackage{caption, subcaption}
\usepackage{float}
\usepackage{mathtools}

\usepackage[colorlinks=true, linkcolor=blue, citecolor=red, urlcolor=blue]{hyperref}

\usepackage{lineno} 

\usepackage[colorinlistoftodos, textsize=footnotesize]{todonotes}

\theoremstyle{plain}
\newtheorem{theorem}{Theorem}[section]
\newtheorem{corollary}[theorem]{Corollary}
\newtheorem{lemma}[theorem]{Lemma}

\theoremstyle{definition}

\theoremstyle{remark}

\newcommand{\R}{\mathbb{R}}
\newcommand{\e}{\varepsilon}

\begin{document}
\allowdisplaybreaks

\title[A Bounded Determinantal Ratio for Positive Definite Matrices]{A Bounded Determinantal Ratio for Positive Definite Matrices}

\author{Hristo Sendov}
\address{Department of Statistical and Actuarial Sciences \\
Department of Mathematics \\
Western University \\
1151 Richmond Str. \\
London, ON, N6A 5B7 Canada}
\email{hsendov@uwo.ca}
\thanks{The author was partially supported by the Natural Sciences and Engineering Research Council (NSERC) of Canada. (Grant number RGPIN-2020-06425.)}

\subjclass[2020]{Primary 15A60, 11C99.} 

\date{}

\keywords{Eigenvalues, Symmetric matrices, Positive definite matrices, Principal Submatrices, Hadamard's Inequality, Hadamard-Fischer's inequality, Koteljanskii's Inequality}

\begin{abstract}
Bounded ratios between products of minors of a positive definite matrix have a long history. Starting with Hadamard's inequality which bounds from above the determinant by the product of its diagonal entries and progressing through its generalizations the Fischer’s inequality and the Koteljanskii’s inequality. Finding new bounded determinantal ratios is a difficult task and finding their supremum is even more difficult. In 2008, Hall and Johnson showed that the ratio
$$
\frac{\det A[\{1,2,4\}] \det A[\{1,3,4\}] \det A[\{2,3\}] \det A[\{1\}] \det A[\{4\}]}{\det A[\{1,2\}] \det A[ \{1,3\}] 
\det A[\{1,4\}]  \det A[\{2,4\}] \det A[\{3,4\}]} 
$$
is bounded above by $4$. They conjectured that the supremum of the ratio, over all $4 \times 4$ positive definite matrices $A$, is $27/16$. Here, 
$A[\alpha]$ denotes the principal minor of $A$ corresponding to the rows and columns indexed by $\alpha \subseteq \{1,2,3,4\}$. In this paper we confirm that the supremum of the ratio is $27/16$ and exhibit a sequence of matrices that approaches it. 
\end{abstract}

\maketitle

\section{Introduction}

Let $A := (a_{ij})$ denote an $n$-by-$n$ positive definite matrix. Let $\alpha := \{\alpha_1, \dots, \alpha_p\}$ be a collection of index sets, such that each $\alpha_i \subseteq \{1,\dots,n\}$. For an index set $\alpha_i$, define $A[\alpha_i]$ to be the principal submatrix of $A$ corresponding to the rows and columns indexed by $\alpha_i$. By convention, if $\alpha_i$ is empty, then $A[\emptyset] := 1$.  Define the product of determinants over these index sets as:
$$
A(\alpha) := \prod_{i=1}^p \det A[\alpha_i].
$$
Much effort has been made to characterize the collection of sets $\alpha$ and $\beta$ for which the 
ratio   $A(\alpha)/A(\beta)$ is bounded (more specifically, bounded by one) for all positive definite matrices $A$. An example of these types of ratios comes from Koteljanskii’s inequality, also known as the Hadamard-Fischer inequality:
\begin{align}
\label{2022-07-27-ineq-3}
\frac{\det A[\alpha_1 \cup \alpha_2] \det A[\alpha_1 \cap \alpha_2] }{\det A[\alpha_1] \det A[\alpha_2]} \leq 1,
\end{align}
which holds for any $\alpha_1, \alpha_2 \subseteq \{1,\dots,n\}$ and any positive definite $A$. 
This is an example of a  ratio bounded by one.
Koteljanskii’s inequality generalizes both Hadamard’s inequality:
\begin{align*}
\frac{\det A}{a_{11} \cdots a_{nn}} \leq 1,
\end{align*}
and Fischer’s inequality:
\begin{align*}
\frac{\det A}{\det A[\alpha] \det A[\alpha^c]} \leq 1,
\end{align*}
where $\alpha^c$ is the complement of $\alpha$ in $\{1,\dots,n\}$. These inequalities are detailed in \cite{Horn:1990}, see Theorems 7.8.1, 7.8.5, and 7.8.9. Various generalizations and extensions of inequality~\eqref{2022-07-27-ineq-3} appear in \cite{Barrett:1989}, \cite{Braun:2023},  \cite{Choi:2016}, \cite{Dong:2022}, \cite{Fallat:2001}, \cite{Fallat:2003}, \cite{Fu:2017},\cite{Johnson:1985},  \cite{Johnson:1993}   and \cite{Jiang:2019} among others. 

Finding other bounded determinantal ratios turns out to be a very difficult problem. Necessary and sufficient conditions (but not both) were formulated in \cite{Johnson:1993}. In \cite{TracyHall:2008}, the authors showed, using results from \cite{Fiedler:1964}, that
\begin{equation}
R(A) := \frac{A(\{\{1,2,4\}, \{1,3,4\}, \{2,3\}, \{1\}, \{4\} \})}{A(\{ \{1,2\}, \{1,3\}, \{1,4\}, \{2,4\}, \{3,4\} \})} \le 4.
\end{equation}
They conjectured that the smallest upper bound is $27/16$.  

In this paper, we confirm that $27/16$ is the smallest upper bound on $R(A)$ and exhibit a sequence of positive definite matrices that approximates it. The result makes this the first example of a bounded determinantal ratio with supremum bigger than one.  The expanded ratio is 
$$
R(A) = \frac{\left| \begin{array}{lll} a_{11} & a_{12} & a_{14} \\ a_{12} & a_{22} & a_{24} \\ a_{14} & a_{24} & a_{44} \end{array} \right|
\left| \begin{array}{lll} a_{11} & a_{13} & a_{14} \\ a_{13} & a_{33} & a_{34} \\ a_{14} & a_{34} & a_{44} \end{array} \right|
\left| \begin{array}{ll} a_{22} & a_{23} \\ a_{23} & a_{33} \end{array} \right| a_{11} a_{44} }{
 \left| \begin{array}{ll} a_{11} & a_{12} \\ a_{12} & a_{22} \end{array} \right|
  \left| \begin{array}{ll} a_{11} & a_{13} \\ a_{13} & a_{33} \end{array} \right|
   \left| \begin{array}{ll} a_{11} & a_{14} \\ a_{14} & a_{44} \end{array} \right|
    \left| \begin{array}{ll} a_{22} & a_{24} \\ a_{24} & a_{44}\end{array} \right|
     \left| \begin{array}{ll} a_{33} & a_{34} \\ a_{34} & a_{44} \end{array} \right|
     }.
$$
By dividing appropriate rows and columns in the numerator and the denominator by the square root of the positive entries $a_{11}, a_{22}, a_{33}, a_{44}$, one can assume that $A$ is a correlation matrix, that is
$$
a_{11} =  a_{22} =  a_{33} = a_{44} = 1.
$$
The positive definiteness implies that  $a_{ij} \in (-1,1)$ for all $1\le i < j \le 4$. 
By multiplying rows and corresponding columns in the numerator and the denominator by $-1$, if necessary, we can further assume that 
\begin{align*}
a_{12},  a_{13},  a_{14}, a_{23} \in [0, 1) \mbox{ and }  a_{24},  a_{34} \in (-1,1).
\end{align*}
These assumptions are used only to shorten a few case studies, otherwise they are not essential for the development. 
Final important observation is that this ratio is invariant under swapping of the indexes $1 \leftrightarrow 4$ and $2 \leftrightarrow 3$. The following theorem is the main result of the paper. 

\begin{theorem}
\label{2026-01-15-mainthm}
For any $4\times 4$ positive definite matrix $A$ we have $R(A) < 27/16$ and that supremum is approximated by 
\begin{align*}
A(\epsilon):=
\begin{bmatrix}
1 & 1-\epsilon & 1-\epsilon  & 1-\epsilon  \\
1-\epsilon  & 1 & 1-3\epsilon+\epsilon^2 & 1-\epsilon \\
1-\epsilon  & 1-3\epsilon+\epsilon^2   & 1 & 1-\epsilon  \\
1-\epsilon & 1-\epsilon  &1-\epsilon  & 1
\end{bmatrix}
\end{align*}
as $\epsilon$ approaches $0^+$. 
\end{theorem}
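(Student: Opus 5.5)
The plan splits into an easy asymptotic check that $A(\epsilon)$ attains $27/16$ in the limit, and a hard global bound $R(A)<27/16$ obtained by rewriting $R$ in partial correlations and optimising.

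\textbf{Limit along $A(\epsilon)$.} Every $2\times 2$ minor in the denominator has off-diagonal entry $1-\epsilon$, so each equals $1-(1-\epsilon)^2=2\epsilon-\epsilon^2$. The denominator is therefore $32\epsilon^5(1+O(\epsilon))$. In the numerator, the $3\times3$ minors on $\{1,2,4\}$ and $\{1,3,4\}$ have all off-diagonal entries $1-\epsilon$, so each equals
\[
1-3(1-\epsilon)^2+2(1-\epsilon)^3=3\epsilon^2-2\epsilon^3 .
\]
The remaining factor is $1-(1-3\epsilon+\epsilon^2)^2=6\epsilon+O(\epsilon^2)$. Hence $R(A(\epsilon))=\frac{54\epsilon^5}{32\epsilon^5}(1+O(\epsilon))\to 27/16$.

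Positive definiteness of $A(\epsilon)$ for small $\epsilon>0$ also has to be checked.
\begin{itemize}
\item The leading principal minors of orders $1,2,3$ are $1$, $2\epsilon-\epsilon^2$ and $\det A[\{1,2,3\}]=O(\epsilon^2)$, and all are positive for small $\epsilon$.
\item $\det A(\epsilon)$ must be expanded in $\epsilon$. It is natural to expect it to vanish to higher order, with positive leading coefficient.
\item The choice $a_{23}=1-3\epsilon+\epsilon^2$ looks as if it sits near the boundary of the admissible interval for $a_{23}$. This fits the optimisation picture below, where the supremum is approached at a boundary point.
\end{itemize}

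\textbf{Upper bound: rewriting $R$.} Work with the normalised correlation matrix and write $a=a_{12}$, $b=a_{13}$, $c=a_{14}$, $d=a_{23}$, $e=a_{24}$, $f=a_{34}$. Taking the Schur complement of the $(1,1)$ entry gives
\[
\det A[\{1,2,4\}]=(1-a^2)(1-c^2)-(e-ac)^2=(1-a^2)(1-c^2)(1-r^2),
\]
where $r$ is the partial correlation of $2,4$ given $1$. Similarly $\det A[\{1,3,4\}]=(1-b^2)(1-c^2)(1-s^2)$ with $s$ the partial correlation of $3,4$ given $1$. Hence
\[
R(A)=\frac{(1-r^2)(1-s^2)(1-c^2)(1-d^2)}{(1-e^2)(1-f^2)},\qquad e=ac+r\sqrt{(1-a^2)(1-c^2)},\quad f=bc+s\sqrt{(1-b^2)(1-c^2)} .
\]
The only remaining constraint is positive definiteness of the whole matrix. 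In the partial-correlation (vine) parametrisation, $a,b,c,r,s$ and $t=\rho_{23\mid 14}$ range freely over $(-1,1)$. The variable $d$ is an affine function of $t$, taking values in an open interval $(d_0-\delta,d_0+\delta)$ whose centre $d_0$ and radius $\delta$ depend only on $a,b,c,r,s$.

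\textbf{Upper bound: optimisation.}
\begin{enumerate}
\item \emph{Eliminate $d$.} The factor $1-d^2$ is the only place $d$ enters. Its supremum over the interval is $1$ if $|d_0|<\delta$, and $1-(|d_0|-\delta)^2$ otherwise.
\item \emph{Show the first case is harmless.} I would show that when $|d_0|<\delta$, Koteljanskii-type bounds already give $R\le 1$. Concretely, $(1-c^2)(1-r^2)(1-s^2)\le(1-e^2)(1-f^2)$ should follow in that regime from the relation between $e,f$ and the partial correlations.
\item \emph{Reduce the second case.} This leaves an explicit function of $(a,b,c,r,s)$, with the supremum taken on the boundary $\det A=0$, which explains the strict inequality. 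Using the symmetry $1\leftrightarrow4$, $2\leftrightarrow3$ and the sign normalisations, I would argue via a symmetrisation step that one may take $a=b$ and $r=s$.
\item \emph{Finish in few variables.} After the reduction, substitute $c=1-\gamma$, $a=1-\alpha$, and so on, and prove the resulting two- or three-variable inequality $<27/16$ by AM--GM. The constant $27/16=\tfrac{3^3}{2^4}$ suggests an AM--GM with weights in the ratio $1{:}2$, matching the local scaling $3\epsilon^2\cdot3\epsilon^2\cdot6\epsilon$ against $(2\epsilon)^5$.
\end{enumerate}

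\textbf{Main obstacle.} I expect the hardest step to be the reduction and bound in the boundary case. The supremum is only approached in the degenerate corner where all correlations tend to $1$, so every inequality used must be sharp to leading order there. A crude Hadamard-type estimate at any stage would lose the constant, which is presumably why only $4$ was obtained in \cite{TracyHall:2008}. If symmetrisation fails, my fallback is a Lagrange-multiplier analysis on the surface $\det A=0$. I would show that its only critical configurations are the symmetric ones, and treat the edges $|a|,|b|,|c|\to1$ by a separate asymptotic expansion.
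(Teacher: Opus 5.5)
\textbf{Step 2 is false.} The regime $|d_0|<\delta$ is exactly where $a_{23}=0$ is admissible, and $R$ exceeds $1$ there. Take the Gram matrix $A$ of $v_1=(\tfrac12,\tfrac12,\tfrac{1}{\sqrt2})$, $v_2=(1,0,0)$, $v_3=(0,1,0)$, $v_4=\lambda(\tfrac12,\tfrac12,-\tfrac{\sqrt2}{4})$ with $\lambda^2=8/5$. Then $a_{12}=a_{13}=\tfrac12$, $a_{14}^2=\tfrac1{10}$, $a_{24}^2=a_{34}^2=\tfrac25$, $a_{23}=0$ and $\Delta_{124}=\Delta_{134}=\tfrac{9}{20}$. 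This gives $R(A)=\frac{(9/20)^2}{(3/4)^2(9/10)(3/5)^2}=\frac{10}{9}$.

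The matrices $(1-\eta)A+\eta I$ are positive definite correlation matrices with $a_{23}=0$ and $R\to 10/9$ as $\eta\to0^+$. So your inequality $(1-c^2)(1-r^2)(1-s^2)\le(1-e^2)(1-f^2)$ fails in the first regime. Koteljanskii only gives $(1-c^2)(1-r^2)\le 1-e^2$ and $(1-c^2)(1-s^2)\le 1-f^2$, i.e.\ the useless bound $1/(1-c^2)$. To dispose of this regime you need something like the paper's Subsection 2.1. Every interior critical point of $R$ has $a_{23}=0$ and value $1$, so on your first regime the supremum is pushed to $|d_0|=\delta$ or to the edges of the cube.

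\textbf{Steps 3 and 4 carry the whole upper bound but contain no mechanism.} The symmetry you cite, $1\leftrightarrow4$, $2\leftrightarrow3$, acts as $a\leftrightarrow f$, $b\leftrightarrow e$, so its fixed set is not $\{a=b,\ r=s\}$. The swap $2\leftrightarrow3$ alone does give $a\leftrightarrow b$, $r\leftrightarrow s$ and preserves $R$. But invariance never lets you restrict to the fixed set: you need concavity or unimodality along the segment joining a point to its mirror image, and you identify none.

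Moreover, the supremum is not attained in the open cube $(-1,1)^5$ of $(a,b,c,r,s)$. It is approached at its boundary; along $A(\epsilon)$ one has $a,b,c\to1$ and $r,s\to\tfrac12$. So ``finish in few variables'' must be a blow-up analysis at that corner. It must be combined with a proof that no finite critical point of your reduced function, and no other way of leaving the cube, beats $27/16$. The example above shows the reduced function exceeds $1$ at non-degenerate points, so crude bounds will not suffice.

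The paper supplies exactly these ingredients:
\begin{itemize}
\item the scaling $a_{ij}=1-\varepsilon x_{ij}$, which turns the corner into maximizing $\frac{tP(x,z,u)P(y,z,v)}{16xyzuv}$ over the convex set $\{N\succeq0\}$;
\item symmetrization justified by concavity of the largest admissible rescaled gap $t_+$ (from convexity of the PSD cone), and by the identity $P(x_s,z,u_s)/(x_su_s)=4-\frac{(x+u-z)^2}{xu+s(1-s)(x-u)^2}$;
\item an explicit two-variable maximization giving $(3\sqrt3/4)^2$;
\item on $\det A=0$, a Gr\"obner critical-point computation plus an exhaustive asymptotic case analysis, in which some degenerations reach $(3+2\sqrt2)/4>1$.
\end{itemize}
Your vine elimination of $d$, which merges the interior with the $\det A=0$ boundary, is a nice alternative starting point, but as written the upper bound is unproved.

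\textbf{Minor point.} You left positive definiteness of $A(\epsilon)$ open, but it is immediate. The vectors $(1,0,0,-1)$ and $(0,1,-1,0)$ are eigenvectors with eigenvalues $\epsilon$ and $3\epsilon-\epsilon^2$. The remaining $2\times2$ block $\begin{pmatrix}2-\epsilon&2-2\epsilon\\2-2\epsilon&2-3\epsilon+\epsilon^2\end{pmatrix}$ has positive trace and determinant $\epsilon^2(1-\epsilon)$. Hence $\det A(\epsilon)=\epsilon^4(3-\epsilon)(1-\epsilon)>0$ and $A(\epsilon)\succ0$ for $0<\epsilon<1$.
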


Our approach to showing that $R(A)$ is bounded above by $27/16$ is a brute force. We look at the problem as an optimization problem and break the solution into steps discussed in the following subsections.  If one looks at the details of the proof of Theorem~\ref{2026-01-15-mainthm}, it is unavoidable to see that the six variables work together in a magical unison and the supremum $27/16$ is very delicate. This hints that there may be a bigger picture behind the mysterious bounded determinantal ratios. We hope that our investigation sheds light in that direction. 

The domain over which we are optimizing is 
$$
D:=\{A \in \R^{4 \times 4}: A \succ 0, \mbox{diag\,}(A) = (1,1,1,1)\}.
$$
In Subsection~\ref{subsec2.1} we show that the supremum of $R$ is not attained at a (relative) interior point of the domain.
Thus the supremum is attained by a sequence of interior points converging to a boundary point. The boundary consists of positive semidefinite correlation matrices $A$ with zero determinant. They fall into two categories: 1) those that make the denominator of $R$ zero and  2) those that do not. Boundary matrices of the first type are investigated in Subsection~\ref{subsec2.2}, while those of the second type are investigated in Subsection~\ref{subsec2.3}.

\section{Proof of Theorem~\ref{2026-01-15-mainthm}}

The facts that $A(\epsilon)$ is positive definite and that $R(A(\epsilon))$ approaches $27/16$ as $\epsilon \to 0^+$
can be verified directly. Thus, we know that $\sup R \in [27/16, 4]$. Let
\begin{align*}
\Delta_{ij} (A):= \det A(\{i,j\}) = 1-a_{ij}^2, \quad \mbox{for $1 \le i < j \le 4$}.
\end{align*}
Also for $1 \le i < j < k \le 4$, let
\begin{align}
\label{2026-01-15-Delta}
\Delta_{ijk}(A) := \det A(\{i,j,k\}) &= 1+2a_{ij}a_{ik}a_{jk}-a_{ij}^2-a_{ik}^2-a_{jk}^2 \\
\nonumber
&= (1-a^2)(1-b^2)-(c-ab)^2 
\end{align}
for all $a,b,c \in \{a_{ij}, a_{jk}, a_{ik}\}$. Thus, if $a \rightarrow 1$, then the condition $\Delta_{ijk} > 0$ implies $b-c \rightarrow 0$. We use that implication frequently.  

\subsection{\texorpdfstring{$R$}{R} does not achieve its maximum in the interior of the domain.}
\label{subsec2.1}
The ratio can be written as
\begin{align*}
R=\frac{\Delta_{124}\,\Delta_{134} \, \Delta_{23}}
{\Delta_{12} \, \Delta_{13} \, \Delta_{14} \, \Delta_{24} \, \Delta_{34} }.
\end{align*}
Taking logarithms from both sides gives
\begin{equation}\label{eq:Phi}
\Phi(A):= 
\log\Delta_{124}+\log\Delta_{134}+\log(\Delta_{23})
-\sum_{ij\in\{12,13,14,24,34\}}\log(\Delta_{ij} ).
\end{equation}
If $R(A)$ achieves its maximum in an interior point then the partial derivatives need to be zero there. 
Note that only the third term depends on $a_{23}$. Differentiating \eqref{eq:Phi} with respect to $a_{23}$ gives
\begin{align*}
\frac{\partial\Phi}{\partial a_{23}}
=-\frac{2a_{23}}{1-a_{23}^2} = 0 \mbox{ or } a_{23}=0.
\end{align*}
From $\partial\Phi/\partial a_{12}=0$ and $\partial\Phi/\partial a_{24}=0$ one gets
\begin{align}
\label{E12}
\frac{a_{12}-a_{14}a_{24}}{\Delta_{124}}=\frac{a_{12}}{1-a_{12}^2}
\quad  \mbox{and} \quad
\frac{a_{24}-a_{12}a_{14}}{\Delta_{124}}=\frac{a_{24}}{1-a_{24}^2}.
\end{align}
Eliminating $\Delta_{124}$ from these equations shows that 
\begin{align}
\label{2026-01-15-cond1}
\mbox{if $a_{12}\neq 0$ and $a_{24}\neq 0$, then } (a_{12}^2-a_{24}^2)(a_{12}a_{24}-a_{14})=0.
\end{align}
Suppose that $a_{12}\neq 0$, $a_{24}\neq 0$, and $a_{12} = \sigma a_{24}$, where $\sigma\in\{+1,-1\}$.
Substitute into the second equation in \eqref{E12} and after cancelling $a_{24}$ we obtain
$$
\Delta_{124}=(1-a_{24}^2)(1-\sigma a_{14}) = 1+\sigma a_{14} a_{24}^2 - \sigma a_{14}- a_{24}^2.
$$
Equating with the expression for $\Delta_{124}$ coming from \eqref{2026-01-15-Delta} and factoring, results in
$$
(\sigma a_{14} - 1)(a_{14} - \sigma a_{24}^2) = 0.
$$
Since we are at an interior point, $a_{14} \not= \pm 1$, thus $a_{14} = \sigma a_{24}^2 = a_{12}a_{24}$. 
Comparing with \eqref{2026-01-15-cond1} gives the condition
 \begin{align}
 \label{2026-01-15-cond2a}
\mbox{either $a_{12} = 0$, or $a_{24} = 0$, or else } a_{14}= a_{12}a_{24}.
\end{align}

Next, $\partial\Phi/\partial a_{13}=0$ and $\partial\Phi/\partial a_{34}=0$ give
\begin{align}
\frac{a_{13}-a_{14}a_{34}}{\Delta_{134}}=\frac{a_{13}}{1-a_{13}^2}
\label{E13}
\quad  \mbox{and} \quad
\frac{a_{34}-a_{13}a_{14}}{\Delta_{134}}=\frac{a_{34}}{1-a_{34}^2}.
\end{align}
Eliminating $\Delta_{134}$ from these equations shows that 
\begin{align}
\label{2026-01-15-cond2}
\mbox{if $a_{13}\neq 0$ and $a_{34}\neq 0$, then } (a_{13}^2-a_{34}^2)(a_{13}a_{34}-a_{14})=0
\end{align}
and proceeding analogously to above, we conclude that 
 \begin{align}
\label{2026-01-15-cond3}
\text{either } a_{13}=0 \text{ or } a_{34}=0,\ \text{or else } a_{14}=a_{13}a_{34}.
\end{align}
The last partial derivative $\partial\Phi/\partial a_{14}=0$, gives
\begin{align}
\label{2026-01-15-cond4}
\frac{a_{14}-a_{12}a_{24}}{\Delta_{124}}
+
\frac{a_{14}-a_{13}a_{34}}{\Delta_{134}}
=
\frac{a_{14}}{1-a_{14}^2}.
\end{align}
We compare \eqref{2026-01-15-cond2a} and \eqref{2026-01-15-cond3} with \eqref{2026-01-15-cond4} in the next four short cases.

\smallskip

\noindent\text{Case 1:} $a_{14}=a_{12}a_{24}$ and $a_{14}=a_{13}a_{34}$.
Then the left-hand side of \eqref{2026-01-15-cond4} is $0$, forcing $a_{14}=0$ and thus 
$a_{12}a_{24}= a_{13}a_{34}=0$. 

\smallskip

\noindent\text{Case 2:} $a_{14}=a_{12}a_{24}$ and $a_{14}\neq a_{13}a_{34}$.
Then \eqref{2026-01-15-cond3} implies $a_{14} \not= a_{13} a_{34} = 0$. Plugging everything into 
\eqref{2026-01-15-cond4} implies that  $a_{13}=a_{34}=0$.

\smallskip

\noindent\text{Case 3:} $a_{14}\neq a_{12}a_{24}$ and $a_{14}=a_{13}a_{34}$.
Then \eqref{2026-01-15-cond2a} implies $a_{14} \not= a_{12} a_{24} = 0$. Plugging everything into 
\eqref{2026-01-15-cond4} implies that  $a_{12}=a_{24}=0$.

\smallskip

\noindent\text{Case 4:} $a_{14}\neq a_{12}a_{24}$ and $a_{14} \not=a_{13}a_{34}$.
Then \eqref{2026-01-15-cond2a} and \eqref{2026-01-15-cond3} imply $a_{12}a_{24}=0$ and $a_{13}a_{34}=0$ and $a_{14} \not=0$. If $a_{12} \not= 0$, then  $a_{24}=0$ and substituting in \eqref{E12} we get $a_{14}^2+a_{24}^2=0$, which is a contradiction, hence $a_{12}= a_{24}=0$. Similarly \eqref{E13} implies that $a_{13}=a_{34}=0$.

A straightforward calculation shows that if a correlation matrix satisfies the conditions and the conclusions of each of these cases, keeping in mind that $a_{23}=0$, then $R(A)=1$. Since the supremum of $R$ is bigger than $1$, an interior point cannot attain the supremum.

\subsection{Boundary matrices that make the denominator of \texorpdfstring{$R$}{R} zero}
\label{subsec2.2}

In this subsection, we take a sequence of positive definite correlation matrices that approaches a boundary matrix that makes the denominator of $R$ zero. We begin by clearing several preliminary cases that show that the only interesting case is when the limit of the sequence is the all-one matrix.  Since the diagonal entries of the correlation matrices are one, by Koteljanskii's inequality, we have
\begin{align*}
\Delta_{124} &\le \min \{\Delta_{12}\Delta_{14}, \Delta_{12} \Delta_{24}, \ \Delta_{14} \Delta_{24}\}, \\
\Delta_{134} &\le \min \{\Delta_{13}\Delta_{14}, \Delta_{13} \Delta_{34}, \Delta_{14} \Delta_{34}\}.
\end{align*}
We consider several brief cases. 

\smallskip

\noindent
{Case 1}. Suppose that $a_{23} \rightarrow 1$, then using the Koteljanskii's inequalities 
$$
R=\frac{\Delta_{124}\,\Delta_{134} \, \Delta_{23}}
{\Delta_{12} \, \Delta_{13} \, \Delta_{14} \, \Delta_{24} \, \Delta_{34} } \le \frac{\Delta_{12}\, \Delta_{14}\,\Delta_{13} \, \Delta_{34}\, \Delta_{23}}
{\Delta_{12} \, \Delta_{13} \, \Delta_{14} \, \Delta_{24} \, \Delta_{34} } = \frac{\Delta_{23}}{\Delta_{24}}.
$$
Thus $R$ converges to $0$ unless $\Delta_{24} \rightarrow 0$ as well. Similarly, mixing the different Koteljanskii's 
inequalities, so that exactly one $\Delta_{14}$ appears in the numerator, we obtain 
\begin{align}
\label{2026-01-23-Rmin}
R\le \min \Big\{\frac{\Delta_{23}}{\Delta_{12}}, \frac{\Delta_{23}}{\Delta_{13}}, \frac{\Delta_{23}}{\Delta_{24}}, \frac{\Delta_{23}}{\Delta_{34}} \Big\}.
\end{align}
This implies that for something interesting to happen, we need to also have  
$a_{12}$, $a_{13}$, $|a_{24}|$, $|a_{34}|$  converge to $1$. The conditions $\Delta_{124} > 0$ and $\Delta_{134} > 0$ now imply that $a_{14}-a_{24} \to 0$ and $a_{14}-a_{34} \to 0$ and since $a_{14} \ge 0$, we conclude that $a_{14}$, $a_{24}$, and $a_{34}$ converge to $1$. Thus all variables converge to $1$. We deal with this case below.

\smallskip

\noindent
{Case 2}. Suppose that $a_{23} \not\rightarrow 1$ but $a_{14} \rightarrow 1$. 
Then we apply the Koteljanskii's inequalities mixing them, so that two $\Delta_{14}$ appear in the numerator
$$
R=\frac{\Delta_{124}\,\Delta_{134} \, \Delta_{23}}
{\Delta_{12} \, \Delta_{13} \, \Delta_{14} \, \Delta_{24} \, \Delta_{34} } 
\le \frac{\Delta_{12}\, \Delta_{14}\,\Delta_{13} \, \Delta_{14}\, \Delta_{23}}
{\Delta_{12} \, \Delta_{13} \, \Delta_{14} \, \Delta_{24} \, \Delta_{34} } = \frac{\Delta_{14}\, \Delta_{23}}{\Delta_{24} \, \Delta_{34} }.
$$
The ratio will converge to zero, unless the first implication below holds, with the rest being similar  
\begin{align*}
&\mbox{either } |a_{24}| \rightarrow 1 \mbox{ or } |a_{34}| \rightarrow 1, \\
&\mbox{either } \phantom{|}a_{12}\phantom{|} \rightarrow 1 \mbox{ or } |a_{34}| \rightarrow 1,\\
&\mbox{either } \phantom{|}a_{12}\phantom{|} \rightarrow 1 \mbox{ or } \phantom{|}a_{13}\phantom{|} \rightarrow 1,\\
&\mbox{either } |a_{24}| \rightarrow 1 \mbox{ or } \phantom{|}a_{13}\phantom{|} \rightarrow 1.
\end{align*}
If $a_{12} \rightarrow 1$, then $\Delta_{123} > 0$ implies $a_{23}-a_{13} \rightarrow 0$.
Hence $\Delta_{23}/\Delta_{13} \rightarrow 1$ and \eqref{2026-01-23-Rmin} shows that $R \le 1$.
If $|a_{34}| \to 1$ then $\Delta_{134} > 0$ implies $a_{13}-a_{34} \rightarrow 0$, but since $a_{13} \ge 0$, this can only happen if $a_{34} \to 1$ in which case $a_{13} \to 1$. Then $\Delta_{123} >0$ implies that $a_{12} - a_{23} \to 0$. 
Hence $\Delta_{23}/\Delta_{12} \rightarrow 1$ and \eqref{2026-01-23-Rmin} shows that $R \le 1$. The other cases are similar. 

\smallskip

\noindent
{Case 3}. Suppose that $a_{23} \not\rightarrow 1$, $a_{14} \not\rightarrow 1$, but $|a_{ij}| \rightarrow 1$ for some $ij \in \{12, 13,24,34\}$.  Suppose $|a_{24}| \to 1$, then $\Delta_{234} >0$ implies that $a_{23} - |a_{34}| \to 0$ and we conclude as before. The other cases are analogous.

\smallskip 

The above discussion shows that the all-one matrix is the only interesting boundary matrix that makes the denominator of $R$ zero. Take a sequence $\{A_n\} = \{(a_{ij}^n)\}$
 of $4 \times 4$ positive definite correlation matrices approaching the all-one matrix.  Define the gaps to be
\[
\delta_{ij}^{n}:=1-a_{ij}^{n}>0 \mbox{ for all } ij = 12,13,14,23, 24,34
\]
let $\varepsilon_n:=\max\{\delta_{ij}^{n} : ij = 12,13,14,24,34\}$ and note that $\varepsilon_n\to 0$.
Define
\[
x_{ij}^n:=\frac{\delta_{ij}^{n}}{\varepsilon_n} \in (0,1] \mbox{  for all $ij = 12,13,14,24,34$ and } x_{23}^n:=\frac{\delta_{23}^{n}}{\varepsilon_n} \ge 0.
\]
The sequence $\{x_{23}^n\}$ is also bounded, as the next lemma shows.

\begin{lemma}
We have $x_{23}^n \in [0,4]$ for all $n$.
\end{lemma}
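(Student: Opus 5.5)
The plan is to extract the bound directly from positivity of the $3\times 3$ principal minor $\Delta_{123}$, using the second form of \eqref{2026-01-15-Delta}. The lower bound $x_{23}^n \ge 0$ is immediate, since $a_{23}^n<1$ gives $\delta_{23}^n>0$. So the whole task is the upper bound $\delta_{23}^n \le 4\varepsilon_n$.

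Drop the superscript $n$. Apply \eqref{2026-01-15-Delta} with $a=a_{12}$, $b=a_{13}$, $c=a_{23}$. The condition $\Delta_{123}>0$ gives
\[
(a_{23}-a_{12}a_{13})^2 < (1-a_{12}^2)(1-a_{13}^2),
\]
and hence
\[
a_{23} > a_{12}a_{13} - \sqrt{(1-a_{12}^2)(1-a_{13}^2)}.
\]
Equivalently,
\[
\delta_{23} = 1-a_{23} < \bigl(1-a_{12}a_{13}\bigr) + \sqrt{(1-a_{12}^2)(1-a_{13}^2)}.
\]
We bound the two terms on the right separately, using the normalization $a_{12},a_{13}\in[0,1)$ and the fact that $\delta_{12},\delta_{13}\le \varepsilon$ by the definition of $\varepsilon_n$.

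For the first term, write $a_{12}=1-\delta_{12}$ and $a_{13}=1-\delta_{13}$. Then
\[
1-a_{12}a_{13} = \delta_{12}+\delta_{13}-\delta_{12}\delta_{13} \le \delta_{12}+\delta_{13} \le 2\varepsilon.
\]
For the second term, note that $1-a_{ij}^2=\delta_{ij}(1+a_{ij})\le 2\delta_{ij}$ because $a_{ij}\ge 0$. Therefore
\[
\sqrt{(1-a_{12}^2)(1-a_{13}^2)} \le 2\sqrt{\delta_{12}\delta_{13}} \le 2\varepsilon.
\]
Adding the two bounds gives $\delta_{23}<4\varepsilon$, that is, $x_{23}^n<4$.

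There is no real obstacle. The only points needing care are the sign conventions: the bound $1+a_{ij}\le 2$ and the use of $\Delta_{123}$ rather than, say, $\Delta_{234}$, since $a_{24},a_{34}$ may be negative. For this reason I would use only the entries $a_{12},a_{13}$, which were normalized to be nonnegative. An equivalent geometric route is to view $A$ as a Gram matrix of unit vectors and use the triangle inequality for angles, $\theta_{23}\le\theta_{12}+\theta_{13}$. Together with subadditivity of $\sin$ on $[0,\pi]$, this gives $\sqrt{\delta_{23}}\le\sqrt{\delta_{12}}+\sqrt{\delta_{13}}\le 2\sqrt{\varepsilon}$, and the same constant $4$.
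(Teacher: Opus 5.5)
Your proof is correct and is essentially the paper's argument. In both, positivity of $\Delta_{123}$ gives $1-a_{23}<(1-a_{12}a_{13})+\sqrt{(1-a_{12}^2)(1-a_{13}^2)}$, and the two terms are then bounded by $\delta_{12}+\delta_{13}$ and $2\sqrt{\delta_{12}\delta_{13}}$; the only cosmetic difference is that you bound $\sqrt{\delta_{12}\delta_{13}}\le\varepsilon_n$ directly, while the paper uses AM--GM to reach $2(\delta_{12}+\delta_{13})$.

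One correction to your side remarks: $1-a_{ij}^2=\delta_{ij}(1+a_{ij})\le 2\delta_{ij}$ needs only $-1<a_{ij}<1$, not $a_{ij}\ge 0$. So the sign normalization plays no role here, and $\Delta_{234}$ would have worked just as well.
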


\begin{proof}
Begin by bounding 
\begin{align*}
 1-a_{12}^{n}a_{13}^{n}
&=\delta_{12}^{n}+\delta_{13}^{n}-\delta_{12}^{n}\delta_{13}^{n}
\le \delta_{12}^{n}+\delta_{13}^{n}  \mbox{ and }\\
1-(a_{12}^{n})^2
&=1-(1-\delta_{12}^{n})^2
=2\delta_{12}^{n}-(\delta_{12}^{n})^2
\le 2\delta_{12}^{n},
\end{align*}
and similarly $1-(a_{13}^{n})^2\le 2\delta_{13}^{n}$. 

Expand the determinant  $\Delta_{123}^{n}$ 
\[
\Delta_{123}^{n}
= \bigl(1-(a_{12}^{n})^2\bigr)\bigl(1-(a_{13}^{n})^2\bigr) 
- \bigl(a_{23}^{n}-a_{12}^{n}a_{13}^{n}\bigr)^2
>0.
\]
and solve for $a_{23}^{n}$ to get the interval constraint
\[
a_{12}^{n}a_{13}^{n}-\sqrt{(1-(a_{12}^{n})^2)(1-(a_{13}^{n})^2)}
<
a_{23}^{n}
<
a_{12}^{n}a_{13}^{n}+\sqrt{(1-(a_{12}^{n})^2)(1-(a_{13}^{n})^2)}.
\]
From the lower interval constraint we obtain
\begin{align*}
1-a_{23}^{n}
&< 1-a_{12}^{n}a_{13}^{n} +\sqrt{(1-(a_{12}^{n})^2)(1-(a_{13}^{n})^2)} \\
&\le \delta_{12}^{n}+\delta_{13}^{n} + 2\sqrt{\delta_{12}^{n}\delta_{13}^{n}} \\
& \le 2\bigl(\delta_{12}^{n}+\delta_{13}^{n}\bigr) \\
& \le 4\varepsilon_n.
\end{align*}
Thus, $x_{23}^n= (1-a_{23}^{n})/\varepsilon_n \le 4$.
\end{proof}

Vector $(x_{12}^n, x_{13}^n, x_{14}^n, x_{24}^n, x_{34}^n, x_{23}^n)$ is bounded, hence has a convergent subsequence.
Thus, without loss of generality, we may assume that
\[
(x_{12}^n, x_{13}^n, x_{14}^n, x_{24}^n, x_{34}^n, x_{23}^n) \to (x,y,z,u,v,t) \in[0,1]^4 \times [0,4]
\]
and that the sequence $\{A_n\}$ has the form
\begin{align*}
A_n =
\begin{pmatrix}
1& 1-\varepsilon_n x & 1-\varepsilon_n y & 1-\varepsilon_n z \\
1-\varepsilon_n x & 1 & 1-\varepsilon_n t & 1-\varepsilon_n u \\
1-\varepsilon_n y & 1-\varepsilon_n t & 1 &  1-\varepsilon_n v \\
1-\varepsilon_n z & 1-\varepsilon_n u & 1-\varepsilon_n v & 1
\end{pmatrix}.
\end{align*}
We call such a sequence a {\it normalized sequence}. 

\begin{lemma}
\label{2016-01-17-lemma}
A  normalized sequence $\{A_n\}$ satisfies
\begin{align}
\label{2026-01-16-RAn}
R(A_n)= \frac{t\,P(x,z,u)\,P(y,z,v)}{16\, x yzuv} +o(1),
\end{align}
where 
\[
P(\alpha,\beta,\gamma):=2(\alpha\beta+\alpha\gamma+\beta\gamma)-(\alpha^2+\beta^2+\gamma^2).
\]
\end{lemma}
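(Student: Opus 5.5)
The plan is a direct asymptotic expansion of each factor of $R(A_n)$ in powers of $\varepsilon_n$. Write $a_{ij}^n = 1-\varepsilon_n x_{ij}^n$ with $x_{ij}^n \to$ the corresponding limits $x,y,z,u,v,t$. We keep the exact $x_{ij}^n$ throughout and only pass to the limit at the end. Each $2\times 2$ factor is exact:
$$\Delta_{ij}^n = 1-(1-\varepsilon_n x_{ij}^n)^2 = \varepsilon_n x_{ij}^n\,(2-\varepsilon_n x_{ij}^n).$$
Hence the denominator equals $32\,\varepsilon_n^5\, x_{12}^n x_{13}^n x_{14}^n x_{24}^n x_{34}^n\,(1+O(\varepsilon_n))$. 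In the numerator, $\Delta_{23}^n = 2\varepsilon_n x_{23}^n(1+O(\varepsilon_n))$.

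The key computation is the $3\times 3$ determinant. Substitute $a=1-\varepsilon\alpha$, $b=1-\varepsilon\beta$, $c=1-\varepsilon\gamma$ into $1+2abc-a^2-b^2-c^2$. The constant and linear terms in $\varepsilon$ cancel:
\begin{itemize}
\item the constant term is $1+2-3=0$;
\item the linear term is $2(-\alpha-\beta-\gamma)\varepsilon+2(\alpha+\beta+\gamma)\varepsilon = 0$.
\end{itemize}
The quadratic term is $2(\alpha\beta+\alpha\gamma+\beta\gamma)\varepsilon^2-(\alpha^2+\beta^2+\gamma^2)\varepsilon^2 = P(\alpha,\beta,\gamma)\varepsilon^2$. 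The cubic term is $-2\alpha\beta\gamma\,\varepsilon^3$. So the identity is exact:
$$\Delta_{ijk} = \varepsilon^2 P(\alpha,\beta,\gamma) - 2\varepsilon^3\alpha\beta\gamma.$$
Applying it gives $\Delta_{124}^n = \varepsilon_n^2 P(x_{12}^n,x_{14}^n,x_{24}^n)+O(\varepsilon_n^3)$, and likewise $\Delta_{134}^n = \varepsilon_n^2 P(x_{13}^n,x_{14}^n,x_{34}^n)+O(\varepsilon_n^3)$. All $x$'s lie in $[0,4]$, so the $O(\cdot)$ constants are uniform. With $P$ symmetric, these match $P(x,z,u)$ and $P(y,z,v)$ in the limit.

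Dividing, the powers $\varepsilon_n^5$ cancel and
$$R(A_n) = \frac{2x_{23}^n\,\bigl(P_{124}^n+O(\varepsilon_n)\bigr)\bigl(P_{134}^n+O(\varepsilon_n)\bigr)(1+O(\varepsilon_n))}{32\,x_{12}^n x_{13}^n x_{14}^n x_{24}^n x_{34}^n}.$$
Then let $x_{ij}^n\to$ their limits. This gives the stated formula with $2/32=1/16$.

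The main obstacle is the meaning of $o(1)$ when some limit among $x,y,z,u,v$ is $0$, since then the denominator degenerates. Two facts limit the damage. First, $\varepsilon_n$ is a maximum, so at least one of these $x$'s equals $1$. Second, Koteljanskii-type bounds such as $\Delta_{124}\le \Delta_{12}\Delta_{14}$ keep $R$ bounded (by $4$). The plan is therefore to state the lemma for the case where the limits $x,y,z,u,v$ are positive, where the argument above is complete. In degenerate cases we interpret the expression with the $x_{ij}^n$ in place of the limits, for which the identity above holds verbatim with $o(1)$ error as long as the $O(\varepsilon_n)$ terms are small relative to the $P$'s. Any remaining degenerate limits would be handled separately in the subsequent analysis.
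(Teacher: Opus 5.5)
Your proof is correct and is the same direct expansion the paper uses: first order for each $\Delta_{ij}$, second order via $P$ for $\Delta_{124}$ and $\Delta_{134}$, cancel $\varepsilon_n^5$, then pass to the limit; keeping the $x_{ij}^n$ rather than freezing them at their limits (as the paper's normalized form does), together with the exact cubic term $-2\varepsilon^3\alpha\beta\gamma$, is if anything the more careful version, and restricting to positive limits $x,y,z,u,v$ is exactly the regime where the stated right-hand side is defined. Your hedge about degenerate limits can in fact be dropped: writing $x^n,y^n,z^n,u^n,v^n,t^n$ for $x_{12}^n,x_{13}^n,x_{14}^n,x_{24}^n,x_{34}^n,x_{23}^n$, your exact identity gives $R(A_n)=\frac{t^n(2-\varepsilon_n t^n)}{z^n\prod_{ij\neq 23}(2-\varepsilon_n x_{ij}^n)}(A_n-2\varepsilon_n z^n)(B_n-2\varepsilon_n z^n)$ with $A_n:=P(x^n,z^n,u^n)/(x^nu^n)\in(0,4\min\{1,z^n/x^n,z^n/u^n\}]$ and $B_n:=P(y^n,z^n,v^n)/(y^nv^n)\in(0,4\min\{1,z^n/y^n,z^n/v^n\}]$, so $\max\{x^n,y^n,z^n,u^n,v^n\}=1$ forces $A_nB_n/z^n\le 16$ and hence $R(A_n)=t^nA_nB_n/(16z^n)+O(\varepsilon_n)$ uniformly, with no smallness condition on the $P$'s.
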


\begin{proof}
Since $a_{ij}^{n}=1-\varepsilon_n x_{ij}^n$, with $\{x_{ij}^n\}$ bounded, we have
\[
1-(a_{ij}^n)^2 = 1-(1-\varepsilon_n x_{ij}^n)^2 = 2\varepsilon_n x_{ij}^n+O(\varepsilon_n^2).
\]
Substituting into the denominator of $R(A_n)$ leads to
\[
\prod_{ij\in\{12,13,14,24,34\}}\bigl(1-(a_{ij}^{n})^2\bigr)
=
(2\varepsilon_n)^5\, x_{12}^nx_{13}^nx_{14}^n x_{24}^n x_{34}^n + O(\varepsilon_n^6).
\]
For the $3 \times 3$ minors, using the determinant identity
\[
\left|
\begin{array}{ccc}
1&1-\varepsilon \alpha&1-\varepsilon \beta\\
1-\varepsilon \alpha&1&1-\varepsilon \gamma\\
1-\varepsilon \beta&1-\varepsilon \gamma&1
\end{array}
\right|
=
\varepsilon^2\bigl(2(\alpha\beta+\alpha\gamma+\beta\gamma)-(\alpha^2+\beta^2+\gamma^2)\bigr)+O(\varepsilon^3),
\]
we obtain
\begin{align}
\label{2026-01-17-DDPP}
\Delta_{124}(A_n)=\varepsilon_n^2\,P(x,z,u) +O(\varepsilon_n^3)
\,\,\, \mbox{ and } \,\,\,
\Delta_{134}(A_n)=\varepsilon_n^2\,P(y,z,v) +O(\varepsilon_n^3).
\end{align}
Substituting these expansions into $R(A_n)$ yields
\[
R(A_n)=
\frac{(2\varepsilon_n x_{23}^n+O(\varepsilon_n^2))(\varepsilon_n^2 P(x,z,u)+O(\varepsilon_n^3))(\varepsilon_n^2 P(y,z,v)+O(\varepsilon_n^3))}
{(2\varepsilon_n)^5 x_{12}^nx_{13}^nx_{14}^n x_{24}^n x_{34}^n + O(\varepsilon_n^6)}.
\]
Divide the numerator and the denominator by $\varepsilon_n^5$, regroup the terms and cancel a $2$, to obtain \eqref{2026-01-16-RAn}.
\end{proof}

The expansion of the determinant of $A_n$, in terms of $\varepsilon_n$, does not have a constant term, nor  
$\varepsilon_n$ or $\varepsilon_n^2$ terms. It is
\[
\det A_n=\varepsilon_n^3\,Q(x,y,z,u,v,t)+O(\varepsilon_n^4),
\]
where $Q$ is the cubic polynomial
\[
\begin{aligned}
\frac{1}{2}Q(x,y,z,u,v,t)=\;&
-t^{2}z
-tuv
+tuy
+tuz
+tvx
+tvz
-txy
+txz \\
&+tyz
-tz^{2}
-u^{2}y
+uvx
+uvy
+uxy
-uxz\\
&-uy^{2}
+uyz
-v^{2}x
-vx^{2}
+vxy
+vxz
-vyz.
\end{aligned}
\]
The fact that $\det A_n > 0$ implies that 
\begin{align}
\label{2016-01-17-cond1}
Q(x,y,z,u,v,t)\ge 0.
\end{align}
Together with \eqref{2026-01-17-DDPP}, for the other two $3\times 3$ principal  minors we have
\begin{align*}
\Delta_{123} &= \varepsilon^2\,P(x,y,t) + O(\varepsilon^3), \quad
\Delta_{234} = \varepsilon^2\,P(u,v,t) + O(\varepsilon^3)
\end{align*}
and they together imply 
\begin{align}
\label{2016-01-17-cond2}
P(x,z,u) \ge 0, P(y,z,v) \ge 0, P(x,y,t) \ge 0, P(u,v,t) \ge 0.
\end{align}
Finally from above, we have 
\begin{align}
\label{2016-01-17-cond3}
x,y,z,u,v \in [0,1] \mbox{ and } t \in [0,4].
\end{align}

Observe that if $x = 0$, then the condition $P(x,z,u) \ge 0$ simplifies to $-(z-u)^2 \ge 0$, forcing $z=u$, in which case $P(x,z,u)=0$. Thus the ratio on the right-hand side of \eqref{2026-01-16-RAn} is zero. The conclusion is the same if any of the other variables $y,z,u,v,t$ is zero. Thus, we can assume that 
$$
x,y,z,u,v, t > 0.
$$

Define the set 
$$
\mathcal{FR} := \{(x,y,z,u,v,t) : \eqref{2016-01-17-cond1}, \eqref{2016-01-17-cond2}, \mbox{ and }\eqref{2016-01-17-cond3} \mbox{ hold} \}
$$
and the functions
$$
\mathcal{P}(x,y,z,u,v,t)
=\frac{t}{16z}\,A(x,z,u)\,B(y,z,v),
$$
where
$$
A(x,z,u):=\frac{P(x,z,u)}{xu}
\,\,\, \mbox{ and } \,\,\,
B(y,z,v):=\frac{P(y,z,v)}{yv}.
$$
The importance of Lemma~\ref{2016-01-17-lemma} lies in the realization that
\[
\sup\{ \ \limsup_{n\to\infty}R(A_n) : \{A_n\} \mbox{ is a normalized sequence} \}=  \sup_{\mathcal{FR} } \mathcal{P}(x,y,z,u,v,t).
\]
Thus, it suffices to find the supremum on the right-hand side. 
The next lemma simplifies the situation. 

\begin{lemma}
\label{2026-01-17-lem}
If $P(x,z,u)\ge 0, P(y,z,v)\ge 0$, and $Q(x,y,z,u,v,t)\ge 0$, then 
\[
P(x,y,t)\ge 0 \mbox{ and } P(u,v,t)\ge 0.
\]
\end{lemma}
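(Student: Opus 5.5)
The plan is to read the polynomials geometrically. Heron's formula gives $P(\alpha,\beta,\gamma)=16\,\mathrm{Area}^2$ for a triangle with side lengths $\sqrt\alpha,\sqrt\beta,\sqrt\gamma$. More precisely, for $\alpha,\beta,\gamma\ge 0$,
\[
P(\alpha,\beta,\gamma)=(\sqrt\alpha+\sqrt\beta+\sqrt\gamma)(-\sqrt\alpha+\sqrt\beta+\sqrt\gamma)(\sqrt\alpha-\sqrt\beta+\sqrt\gamma)(\sqrt\alpha+\sqrt\beta-\sqrt\gamma).
\]
Hence $P(\alpha,\beta,\gamma)\ge 0$ exactly when $\sqrt\alpha,\sqrt\beta,\sqrt\gamma$ satisfy the (possibly degenerate) triangle inequalities. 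To see this, note that at most one of the three "difference" factors can be negative, since the sum of any two of them is nonnegative.

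Similarly, I expect $Q$ to be a positive multiple of the Cayley--Menger determinant, i.e. of $288\,\mathrm{Vol}^2$, for four points $p_1,p_2,p_3,p_4$ with squared distances
\[
|p_1p_2|^2=x,\quad |p_1p_3|^2=y,\quad |p_1p_4|^2=z,\quad |p_2p_3|^2=t,\quad |p_2p_4|^2=u,\quad |p_3p_4|^2=v.
\]
This is consistent with the role of $Q$ as the leading term of $\det A_n$. I would confirm it by a direct expansion, or by the structural argument below, which only needs the shape of $Q$ as a polynomial in $t$.

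The argument proceeds in four steps; recall that $x,y,z,u,v,t>0$ has already been arranged.

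\emph{Step 1.} Since $P(x,z,u)\ge 0$, there is a (possibly degenerate) triangle $p_1p_2p_4$ with $|p_1p_2|^2=x$, $|p_1p_4|^2=z$, $|p_2p_4|^2=u$. Since $P(y,z,v)\ge0$, there is a triangle $p_1p_3p_4$ with $|p_1p_3|^2=y$, $|p_3p_4|^2=v$, sharing the edge $p_1p_4$ of positive length $\sqrt z$. Place both in $\R^3$ with the common edge on a fixed line, and rotate $p_3$ about that line by an angle $\theta\in[0,\pi]$. Then $|p_2p_3|^2=:t(\theta)$ is a continuous, monotone function. Its range is exactly the interval $[t_-,t_+]$, whose endpoints are the two planar configurations (same side and opposite sides of the line $p_1p_4$).

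\emph{Step 2.} View $Q$ as a polynomial in $t$. From the displayed formula, $\tfrac12 Q$ has $t^2$-coefficient $-z<0$, so $Q$ is a concave quadratic in $t$. I would show that its two roots are precisely $t_\pm$. The geometric argument is that $Q(x,y,z,u,v,t(\theta))$ is proportional to the squared volume of the tetrahedron $p_1p_2p_3p_4$, which vanishes exactly at the planar configurations. Since $Q$ has only two roots and $t(\theta)$ sweeps $[t_-,t_+]$, these roots are $t_\pm$. If $t_-=t_+$, which happens when one triangle is degenerate, the quadratic has a double root and the same conclusion holds.

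A purely algebraic alternative avoids this identification. Compute the discriminant of $Q$ in $t$ and check that it equals a positive constant times $P(x,z,u)P(y,z,v)$. This is the classical identity expressing the product of two face areas through the dihedral angle, and it already shows the roots are real.

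\emph{Step 3.} Concavity then gives the key conclusion: $Q\ge 0$ forces $t\in[t_-,t_+]$. So there is an actual configuration of four points in $\R^3$ realizing all six squared distances $x,y,z,t,u,v$.

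\emph{Step 4.} In that configuration, the triangles $p_1p_2p_3$ and $p_2p_3p_4$ are genuine, possibly degenerate, triangles. The triangle inequality for their side lengths, combined with the Heron factorization of Step 1, gives $P(x,y,t)\ge 0$ and $P(u,v,t)\ge0$.

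The main obstacle is Step 2: rigorously identifying $Q$ with the Cayley--Menger determinant under this labelling, or equivalently verifying that the roots of $Q$ in $t$ are the planar values $t_\pm$. I would first try the discriminant computation, which is a finite polynomial identity, together with checking one planar configuration. If that becomes messy, the fallback is to note that the matrix $A_n$ agrees to first order with $J-\tfrac{\varepsilon_n}{2}D$, where $D$ is the squared distance matrix. Under this form, $\det A_n$ becomes a bordered determinant that is exactly Cayley--Menger, up to a positive constant, at order $\varepsilon_n^3$.
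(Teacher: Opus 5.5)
Your argument is correct, and it reaches the lemma by a different route from the paper. The paper never constructs points. It forms the symmetric $3\times 3$ matrix $N$ with diagonal $(2x,2y,2z)$ and off-diagonal entries $N_{12}=x+y-t$, $N_{13}=x+z-u$, $N_{23}=y+z-v$. It checks that $\det N=Q$ and that $P(x,z,u)$, $P(y,z,v)$, $P(x,y,t)$ are its $2\times 2$ principal minors, then takes the Schur complement $S$ of the entry $2z$. The diagonal of $S$ is $P(x,z,u)/(2z)\ge 0$ and $P(y,z,v)/(2z)\ge 0$, and $\det S=Q/(2z)\ge 0$. Hence $S\succeq 0$, so the leading $2\times 2$ block of $N$ is positive semidefinite and $P(x,y,t)\ge 0$. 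The second inequality follows from the symmetry $x\leftrightarrow u$, $y\leftrightarrow v$.

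Your hinge is the geometric content of exactly this computation. By polarization, $N$ is twice the Gram matrix of $p_2-p_1,\,p_3-p_1,\,p_4-p_1$. The Schur complement with respect to $2z$ is then twice the Gram matrix of the components orthogonal to the hinge $p_1p_4$, so $S$ has entries $2h_2^2$, $2h_3^2$, $2h_2h_3\cos\theta$, and $\det S\ge 0$ is just $|\cos\theta|\le 1$. The same observation settles what you call the main obstacle: $\det N=8\det G=8(6\,\mathrm{Vol})^2=288\,\mathrm{Vol}^2$, so $Q$ is the Cayley--Menger determinant with constant $1$. In your fallback, $A_n=J-\varepsilon_n D$ exactly, not $J-\tfrac{\varepsilon_n}{2}D$; this only changes the constant.

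The paper's route needs a single polynomial identity, with no square roots, no intermediate value argument, and no separate treatment of degenerate triangles. It also hands you $\mathcal{FR}$ as the spectrahedron $\{N\succeq 0\}$ cut by a box, and that convexity drives the next lemma. Your route explains why the lemma holds: $P=16\,\mathrm{Area}^2$, $Q=288\,\mathrm{Vol}^2$, and the roots of $Q$ in $t$ are the two flat positions of the hinge,
$t_\pm=x+y-\frac{(x+z-u)(y+z-v)}{2z}\pm\frac{\sqrt{P(x,z,u)P(y,z,v)}}{2z}$.
The cost is more moving parts. In particular, the discriminant identity is not merely an alternative for you. When one triangle is degenerate, the hinge exhibits only one root of $Q$. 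It is the vanishing of the discriminant, a positive multiple of $P(x,z,u)P(y,z,v)$, that makes this a double root and rules out any other $t$ with $Q\ge 0$.
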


\begin{proof}
Let $p:=x+y-t$, $q:=x+z-u$, $r:=y+z-v$, 
and consider the matrix
\begin{align*}
N(x,y,z,u,v,t):=
\begin{pmatrix}
2x & p & q\\
p & 2y & r\\
q & r & 2z
\end{pmatrix}.
\end{align*}
A direct calculation shows that $\det N = Q \ge 0$ and 
$$
P(x,z,u) = \det N[\{1,3\}],
P(y,z,v) = \det N[\{2,3\}],
P(x,y,t) = \det N[\{1,2\}].
$$
Write $N$ in block form separating the third coordinate:
\[
N=
\begin{pmatrix}
B & w\\
w^{\mathsf T} & 2z
\end{pmatrix}, 
\mbox{ where }
B:=
\begin{pmatrix}
2x & p\\
p & 2y
\end{pmatrix}
\mbox{ and }
w:=
\begin{pmatrix}
q\\ r
\end{pmatrix}.
\]
Since $z>0$, the Schur complement of the $1 \times 1$ block $2z$ is
\[
S:=B-\frac{1}{2z}ww^{\mathsf T} \mbox{ implying that } \det N=2z \det S.
\]
Moreover, the diagonal entries of $S$ satisfy
\[
S_{11}=2x-\frac{q^2}{2z}=\frac{4xz-q^2}{2z}=\frac{P(x,z,u)}{2z}\ge 0,
\]
\[
S_{22}=2y-\frac{r^2}{2z}=\frac{4yz-r^2}{2z}=\frac{P(y,z,v)}{2z}\ge 0.
\]
Since $\det N \ge 0$, we also have $\det S \ge 0$, and this together with the non-negativity of the diagonal entries imply that 
$S\succeq 0$.  In particular, this shows 
\[
B = S + \frac{1}{2z}ww^{\mathsf T}\succeq 0.
\]
Therefore $P(x,y,t) = 4xy-(x+y-t)^2 = \det B\ge 0$.

The original determinantal ratio is invariant under the  
 swapping of the indexes $1 \leftrightarrow 4$ and $2 \leftrightarrow 3$. This translates to the fact that the current
 situation is invariant under the simultaneous 
 swapping of $x \leftrightarrow u$ and $y\leftrightarrow v$.
Thus, the conclusion
$P(x,y,t)\ge 0$ becomes precisely $P(u,v,t)\ge 0$. Either that or just repeat the above arguments with the two pairs of variables swapped. 
\end{proof}

Two important facts are contained in the proof of Lemma~\ref{2026-01-17-lem}.

\begin{corollary}
The feasible region of our optimization problem is 
$$
\mathcal{FR}:=\left\{ (x,y,z,u,v,t) :
N(x,y,z,u,v,t) \succeq 0 \right\} \cap \{x,y,z,u,v \in [0,1], t \in [0,4]\},
$$
and, by the linearity of $N(x,y,z,u,v,t)$, it is convex.
It is invariant under the simultaneous 
 swapping of $x \leftrightarrow u$ and $y\leftrightarrow v$.
\end{corollary}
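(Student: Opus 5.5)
We need to prove the Corollary: the feasible region equals $\{N \succeq 0\}\cap$ box, it is convex, and it is invariant under the swap.

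\medskip

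The plan is to show two inclusions, using the identities from the proof of Lemma~\ref{2026-01-17-lem}: $\det N = Q$, and $P(x,z,u)$, $P(y,z,v)$, $P(x,y,t)$ are the three $2\times 2$ principal minors $\det N[\{1,3\}]$, $\det N[\{2,3\}]$, $\det N[\{1,2\}]$.

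\emph{$N\succeq 0$ implies the original constraints.} If $N \succeq 0$ (and $(x,y,z,u,v,t)$ lies in the box), then all principal minors of $N$ are nonnegative. Hence $Q=\det N\ge 0$, which is \eqref{2016-01-17-cond1}, and $P(x,z,u),P(y,z,v),P(x,y,t)\ge 0$. The remaining condition $P(u,v,t)\ge 0$ follows from Lemma~\ref{2026-01-17-lem} once we know $z>0$. The degenerate case $z=0$ can be handled by continuity, or we simply restrict to $z>0$ as the paper already does. So \eqref{2016-01-17-cond1}--\eqref{2016-01-17-cond3} hold.

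\emph{The original constraints imply $N\succeq 0$.} Assume \eqref{2016-01-17-cond1}--\eqref{2016-01-17-cond3}, with $z>0$. The proof of Lemma~\ref{2026-01-17-lem} shows that the Schur complement $S$ of the entry $2z$ is positive semidefinite. Indeed, its diagonal entries are $P(x,z,u)/(2z)\ge 0$ and $P(y,z,v)/(2z)\ge 0$, and $\det S=Q/(2z)\ge 0$. A symmetric $2\times 2$ matrix with nonnegative diagonal and nonnegative determinant is positive semidefinite. Since $2z>0$ and $S\succeq 0$, the standard Schur complement criterion gives $N\succeq 0$. This is the only step that needs care: a nonnegative determinant alone does not give semidefiniteness, and it is precisely the Schur complement that makes the argument work. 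The case $z=0$ is excluded as noted above, or follows by taking closures.

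\emph{Convexity and invariance.} The entries of $N$ are affine functions of $(x,y,z,u,v,t)$, and the PSD cone is convex. Therefore the preimage $\{N\succeq 0\}$ is convex, and intersecting it with the convex box keeps it convex.

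For invariance, a direct route would note that the swap $x\leftrightarrow u$, $y\leftrightarrow v$ maps $N$ to a congruent matrix $TNT^{\mathsf T}$ for a suitable invertible $T$. It is simpler to argue from the original description: the condition set \eqref{2016-01-17-cond1}--\eqref{2016-01-17-cond3} comes from $\det A_n>0$ and the four $3\times3$ minors, and the index swap $1\leftrightarrow4$, $2\leftrightarrow3$ permutes these conditions. Concretely, $P(x,z,u)\leftrightarrow P(u,z,x)$ and $P(y,z,v)\leftrightarrow P(v,z,y)$ by the symmetry of $P$, $P(x,y,t)\leftrightarrow P(u,v,t)$, and $Q$ is invariant because $\det A$ is. The box constraints are also preserved. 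By the equivalence just proved, the set $\{N\succeq0\}\cap$ box is therefore invariant as well.
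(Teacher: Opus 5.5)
Your proposal is correct and takes essentially the paper's route. The two inclusions come from reading $Q$ and the $P$'s as minors of $N$ and from the Schur complement of the entry $2z$, exactly as in the proof of Lemma~\ref{2026-01-17-lem}; convexity comes from the affine dependence of $N$; and invariance comes from the symmetry of the minor conditions. Two small touch-ups: for $z=0$ a direct check is cleaner than appealing to continuity (on both sides $P(x,0,u)=-(x-u)^2\ge 0$ forces $x=u$, $y=v$, and both descriptions then reduce to $P(x,y,t)\ge 0$), and the relabeling that induces $x\leftrightarrow u$, $y\leftrightarrow v$ is actually the transposition $1\leftrightarrow 4$ alone, since the double swap gives $x\leftrightarrow v$, $y\leftrightarrow u$; this slip is inherited from the paper and is harmless, because $\det A$ is invariant under every simultaneous row--column permutation.
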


As a function of $t$, the equation $Q(x,y,z,u,v,t)=0$ is a quadratic polynomial in $t$ with negative leading coefficient $-2z$ and discriminant $P(x, z, u)P(y, z, v) \ge 0$. Let $t_{\pm}(x,y,z,u,v)$ denote its largest and smallest root. 
Using the connection $Q(x,y,z,u,v,t)=\det N(x,y,z,u,v,t)$,  it is easy to see that 
$$
\{t:\ N(x,y,z,u,v,t)\succeq 0\}=[t_-(x,y,z,u,v),\,t_+(x,y,z,u,v)].
$$
The first part of the next  lemma is standard in semidefinite programming.

\begin{lemma}
\label{2026-01-17-lastL}
Fix values $(x,y,z,u,v)$, such that $(x,y,z,u,v,t) \in \mathcal{FR}$ for some $t \ge 0$. For 
$s \in [0,1]$ let
\begin{align}
\label{2026-01-17-as}
a(s):= (1-s)(x,y,z,u,v) + s(u,v,z,x,y).
\end{align}

(a) The function $f(s):=t_+(a(s))$ is concave on $[0,1]$ with $f(s)=f(1-s)$.

(b) The function $g(s):=A(x,z,u)B(y,z,v)$ increases on $[0,1/2]$
and decreases on $[1/2,1]$.

Thus, both functions achieve their maximum at $s=1/2$.
\end{lemma}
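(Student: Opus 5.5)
The plan is to exploit two structural facts established just before the statement. First, $\mathcal{FR}$ is described by the linear matrix inequality $N(x,y,z,u,v,t)\succeq 0$, which is convex. Second, the whole problem is invariant under the swap $\tau:(x,y,z,u,v)\mapsto(u,v,z,x,y)$. The segment $a(s)$ in \eqref{2026-01-17-as} joins a point to its $\tau$-image, and we have $\tau(a(s))=a(1-s)$. Indeed, applying $\tau$ to $(1-s)(x,y,z,u,v)+s(u,v,z,x,y)$ gives $(1-s)(u,v,z,x,y)+s(x,y,z,u,v)$. Both symmetry claims $f(s)=f(1-s)$ and $g(s)=g(1-s)$ will follow from this identity together with the $\tau$-invariance of $Q=\det N$ and of $P$. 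In particular $t_\pm(\tau(\cdot))=t_\pm(\cdot)$, since the quadratic $t\mapsto Q$ is unchanged.

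For (a), I will first check feasibility along the whole segment. By hypothesis there is a $t_0\ge 0$ with $N(a(0),t_0)\succeq 0$. By the $\tau$-invariance in the Corollary, $N(a(1),t_0)\succeq 0$ as well. Linearity of $N$ then gives $N(a(s),t_0)\succeq 0$ for every $s$, so $t_+(a(s))$ is well defined and $\{t: N(a(s),t)\succeq0\}=[t_-(a(s)),t_+(a(s))]$.

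Concavity is the standard hypograph argument. Take $s_1,s_2\in[0,1]$ and $\lambda\in[0,1]$, and set $t_i=f(s_i)$, so that $N(a(s_i),t_i)\succeq 0$. Since $N$ is affine in all six variables jointly and $a(\cdot)$ is affine in $s$, we get
$$
N\bigl(a(\lambda s_1+(1-\lambda)s_2),\lambda t_1+(1-\lambda)t_2\bigr)=\lambda N(a(s_1),t_1)+(1-\lambda)N(a(s_2),t_2)\succeq 0 .
\]
Hence $f(\lambda s_1+(1-\lambda)s_2)\ge \lambda f(s_1)+(1-\lambda)f(s_2)$. A concave function with $f(s)=f(1-s)$ satisfies $f(1/2)\ge \tfrac12 f(s)+\tfrac12 f(1-s)=f(s)$. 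This also shows that $f$ is nondecreasing on $[0,1/2]$ and nonincreasing on $[1/2,1]$.

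For (b), I read $g(s)$ as $A\,B$ evaluated at $a(s)$, and the key step is an algebraic rewriting. Expanding the square shows
$$
P(\alpha,\beta,\gamma)=4\alpha\gamma-(\alpha+\gamma-\beta)^2 .
$$
Therefore
$$
A(x,z,u)=4-\frac{(x+u-z)^2}{xu},\qquad B(y,z,v)=4-\frac{(y+v-z)^2}{yv}.
$$
Along $a(s)$ the sums $x(s)+u(s)=x+u$ and $y(s)+v(s)=y+v$ are constant, and $z$ is fixed, so the numerators $(x+u-z)^2$ and $(y+v-z)^2$ are constant. The product $x(s)u(s)=xu+s(1-s)(x-u)^2$ is a concave quadratic in $s$, symmetric about $1/2$, and nondecreasing on $[0,1/2]$. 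The same holds for $y(s)v(s)$.

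It follows that $A(a(s))$ and $B(a(s))$ are each nondecreasing on $[0,1/2]$ and symmetric about $1/2$. Both are nonnegative along the segment, because $P(x(s),z,u(s))\ge 0$ and $P(y(s),z,v(s))\ge 0$ follow from $N(a(s),t_0)\succeq 0$ via the $2\times2$ principal minors of $N$ (the identities in the proof of Lemma~\ref{2026-01-17-lem}). A product of two nonnegative nondecreasing functions is nondecreasing, which gives the monotonicity of $g$ on $[0,1/2]$; symmetry gives the reverse monotonicity on $[1/2,1]$.

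The only delicate points are the nonnegativity of $A$ and $B$ along the segment and correctly tracking that $\tau$ fixes $z$. Both are handled by the feasibility argument above, so I do not expect a serious obstacle. Here ``increases'' is to be understood as ``is nondecreasing''; it becomes strict when $x\ne u$ or $y\ne v$ and the relevant numerator is nonzero.
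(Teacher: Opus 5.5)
Your proof is correct and follows essentially the same route as the paper. For (a) you use the same convexity argument with the affine maps $N$ and $a(\cdot)$ to get concavity of $f$; for (b) you use the same rewriting $A=4-(x+u-z)^2/(x_su_s)$ with $x_su_s=xu+s(1-s)(x-u)^2$. You are in fact slightly more careful than the paper: you verify $f(s)=f(1-s)$ via the $\tau$-invariance of $Q$, and you check that $A$ and $B$ stay nonnegative along the segment, which the paper leaves implicit but which is needed to pass monotonicity from the two factors to the product $g$.
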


\begin{proof}
(a) By the symmetry and convexity of the feasible region, if $(x,y,z,u,v,t) \in \mathcal{FR}$ for some $t \ge 0$, then 
$(a(s), t) \in \mathcal{FR}$ for all $s \in [0,1]$.

Take any \(s_1,s_2\in[0,1]\) and \(\lambda\in[0,1]\).  
From $N(a(s_i),f(s_i))\succeq 0$, $i=1,2$, and the convexity of the positive semidefinite cone, we get
\begin{align*}
0 &\preceq \lambda N(a(s_1),f(s_1))+(1-\lambda)N(a(s_2),f(s_2)) \\
&= N\bigl(\lambda a(s_1)+(1-\lambda)a(s_2),\lambda f(s_1)+(1-\lambda)f(s_2)\bigr) \\
&= N\bigl(a(\lambda s_1+(1-\lambda)s_2), \lambda f(s_1)+(1-\lambda)f(s_2) \bigr),
\end{align*}
where we used that $N$ and $a$ are affine functions.  This means that the value \(t=\lambda f(s_1)+(1-\lambda)f(s_2)\) is feasible for the parameter \(a(\lambda s_1+(1-\lambda)s_2)\). Since $f(s)$ denotes the largest root, it follows that
\[
f(\lambda s_1+(1-\lambda)s_2)
\ \ge\
\lambda f(s_1)+(1-\lambda)f(s_2).
\]

(b) Since the variables $x, u$ and $y, v$ in $A(x,z,u)B(y,z,v)$ are separated, we show the lemma for $A(x,z,u)$ only. 
Consider only the first and the fourth component of vector $a(s)$:
\begin{align*}
x_s:=(1-s)x + su \mbox{ and } u_s:=(1-s) u + sx
\end{align*}
and note that $x_s+u_s = x+u$ does not depend on $s$. We have
\begin{align*}
A(x_s,z,u_s) = \frac{P(x_s,z,u_s)}{x_su_s}
=\frac{4x_su_s-(x_s+u_s-z)^2}{x_su_s}
= 4 - \frac{(x+u-z)^2}{x_su_s}.
\end{align*}
A direct computation gives
\begin{align*}
x_su_s
=
\bigl((1-s)x+su\bigr)\bigl((1-s)u+sx\bigr)
=
xu+s(1-s)(x-u)^2.
\end{align*}
Thus $x_su_s$ increases on $[0,1/2]$ and decreases on $[1/2,0]$. The lemma follows.
\end{proof}

\begin{theorem}
We have
$$
\sup_{\mathcal{FR}} \mathcal P(x,y,z,u,v,t)  = \frac{27}{16}.
$$
The supremum is achieved at $x=y=z=u=v=1$ and $t=3$.
\end{theorem}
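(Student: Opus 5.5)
The plan is to reduce the six-variable problem to a two-variable one using Lemma~\ref{2026-01-17-lastL}, and then finish by elementary calculus.

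\emph{Step 1: take $t$ as large as possible.} For fixed $(x,y,z,u,v)$, $\mathcal P$ is increasing in $t$. Feasible $t$ form the interval $[t_-,t_+]$, so it suffices to bound $\frac{t_+}{16z}A(x,z,u)B(y,z,v)$. The box constraint $t\le 4$ only shrinks the set, so I would drop it for the upper bound.

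\emph{Step 2: symmetrize.} Apply Lemma~\ref{2026-01-17-lastL} with $s=1/2$. Both $t_+(a(s))$ and $A\cdot B$ are maximized at $s=1/2$, and $z$ is unchanged. Hence we may assume $x=u=:a$ and $y=v=:b$. Then $P(a,z,a)=z(4a-z)$ and $P(b,z,b)=z(4b-z)$. The Schur complement computation from the proof of Lemma~\ref{2026-01-17-lem} becomes explicit. With $q=r=z$, the condition $S\succeq 0$ reads
\[
\bigl(a+b-t-\tfrac z2\bigr)^2\le \bigl(2a-\tfrac z2\bigr)\bigl(2b-\tfrac z2\bigr),
\]
so
\[
t_+=a+b-\tfrac z2+\sqrt{\bigl(2a-\tfrac z2\bigr)\bigl(2b-\tfrac z2\bigr)}.
\]
The objective becomes
\[
F(a,b,z)=\frac{t_+\, z(4a-z)(4b-z)}{16a^2b^2}.
\]

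\emph{Step 3: use homogeneity, then AM--GM.} $F$ is homogeneous of degree $0$ in $(a,b,z)$, so set $z=1$; feasibility requires $a,b\ge 1/4$. Bound the square root by the arithmetic mean, which gives $t_+\le 2a+2b-1$, with equality iff $a=b$. Substituting $s=4a-1$ and $w=4b-1$ (both $\ge 0$) gives
\[
F\le G(s,w):=\frac{8\,sw(s+w)}{(s+1)^2(w+1)^2}.
\]
At $s=w=3$ we get $G=432/256=27/16$. This corresponds to $a=b=z$, hence to $x=y=z=u=v=1$ and $t=3$, which lies in $\mathcal{FR}$ and attains the value. So it remains to show $G\le 27/16$ on $[0,\infty)^2$.

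\emph{Step 4: the two-variable inequality, which I expect to be the main obstacle.} The inequality to prove is
\[
128\,sw(s+w)\le 27(s+1)^2(w+1)^2 .
\]
I would first check the boundary behaviour. $G\to 0$ as $s\to0$ or $w\to0$, and $G\to 0$ as $s$ or $w\to\infty$, since the degree-$3$ numerator is dominated by the degree-$4$ denominator except along directions where one variable stays bounded, and those are handled separately. So the maximum is interior. Next, locate the critical points of $\log G$:
\[
\frac1s+\frac1{s+w}=\frac{2}{s+1},\qquad \frac1w+\frac1{s+w}=\frac{2}{w+1}.
\]
Subtracting the two equations forces $s=w$, because $\frac1s-\frac2{s+1}=\frac{1-s}{s(s+1)}$ is injective in the relevant range; I would verify this carefully, and it is where the delicacy lies. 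With $s=w$, the first equation reduces to $\frac{3}{2s}=\frac{2}{s+1}$, i.e.\ $s=3$, giving the unique maximum $27/16$. If the injectivity argument is awkward, the fallback is a sum-of-squares style verification of the polynomial inequality after setting $s+w=\sigma$ and $sw=\pi$. In those variables it reads $128\pi\sigma\le 27(\pi+\sigma+1)^2$, and for fixed $\sigma$ this is a quadratic in $\pi$ whose discriminant can be checked directly.
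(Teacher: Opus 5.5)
\textbf{There is a genuine gap: the two-variable inequality in Step~4 is false.} Your Steps 1 and 2, the formula for $t_+$, the reduction to $z=1$ by homogeneity, and the substitution $s=4a-1$, $w=4b-1$ all agree with the paper. The trouble starts with the AM--GM step in Step 3. After it, the inequality you need fails. At $(s,w)=(4,2)$ you get $128\cdot 8\cdot 6=6144>6075=27\cdot 25\cdot 9$. Along $s=1$ you get $G(1,w)=2w/(w+1)\to 2>27/16$. In fact $\sup G=2$, because $(\pi+\sigma+1)^2-4\pi\sigma=(\pi-\sigma)^2+2(\pi+\sigma)+1>0$.

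Your boundary analysis therefore fails on exactly the directions you set aside: one variable bounded, the other tending to infinity, where $G\to 8s/(s+1)^2$. The point $(3,3)$ is indeed the only interior critical point of $G$, but it is a saddle. Along $s=3+\delta$, $w=3-\delta$ one has $\log G(3+\delta,3-\delta)=\log G(3,3)+\delta^2/72+O(\delta^4)$. Incidentally, $\frac{1-s}{s(s+1)}$ is not injective on $(0,\infty)$: it decreases up to $1+\sqrt2$ and then increases back toward $0$. The $\sigma,\pi$ fallback cannot help either, since the target inequality itself is false.

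\textbf{Where the loss comes from, and the fix.} The loss comes from replacing $(\sqrt s+\sqrt w)^2$ by $2(s+w)$. The discarded cross term $2\sqrt{sw}$ is exactly what penalizes unbalanced pairs $(s,w)$. The fix is to keep the square root. With $z=1$ you have exactly $t_+=\tfrac14(\sqrt s+\sqrt w)^2$, so the objective equals
\[
\frac{4sw(\sqrt s+\sqrt w)^2}{(s+1)^2(w+1)^2}=\Bigl(\frac{2XY(X+Y)}{(X^2+1)(Y^2+1)}\Bigr)^2,\qquad s=X^2,\ w=Y^2 .
\]
This is the route the paper takes. The escape to infinity is now harmless: with $X$ fixed and $Y\to\infty$, the inner fraction tends to $2X/(X^2+1)\le 1$. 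The critical point equations $X^2Y=2X+Y$ and $Y^2X=2Y+X$ have the single positive solution $X=Y=\sqrt3$. This gives $(3\sqrt3/4)^2=27/16$, attained at $x=y=z=u=v=1$, $t=3$.
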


\begin{proof}
Recall that 
$$
\mathcal P(x,y,z,u,v,t) = \frac{t}{16z} A(x,z,u)B(y,z,v).
$$
Let $\mathcal{FR}_p$ be the projection of $\mathcal{FR}$ onto its first five coordinates.
Take any 
$$
(x,y,z,u,v,t) \in \mathcal{FR}.
$$ 
The largest value of $t$, such that $(x,y,z,u,v,t)$ is still feasible, is $t_+(x,y,z,u,v)$.
Thus, we have
\begin{align}
\label{2026-10-17-sup}
\sup_{\mathcal{FR}} \mathcal P(x,y,z,u,v,t) = \sup_{\mathcal{FR}_p} 
\frac{t_+(x,y,z,u,v)}{16z} A(x,z,u)B(y,z,v).
\end{align}
Recall \eqref{2026-01-17-as}. By Lemma~\ref{2026-01-17-lastL}, on the segment $[a(0), a(1)]$,
the functions $t_+(x,y,z,u,v)$ and $A(x,z,u)B(y,z,v)$ achieve their maximum at the point
$$
\frac{a(0)+a(1)}{2} = \Big(\frac{x+u}{2},\frac{y+v}{2},z,\frac{x+u}{2},\frac{y+v}{2}\Big).
$$
Thus, to find the supremum, it is enough to consider only the points $(x,y,z,x,y)$ from $\mathcal{FR}_p$. In that case the supremum is 
\begin{align*}
\sup_{(x,y,z,x,y) \in \mathcal{FR}_p} &
\frac{t_+(x,y,z,x,y)}{16z} A(x,z,x)B(y,z,y) \\
&= \frac{t_+(x,y,z,x,y)}{16z}\frac{z(4x-z)}{x^2} \frac{z(4y-z)}{y^2} \\
&= \frac{2(x+y)-z+\sqrt{(4x-z)(4y-z)}}{32 z} \frac{z(4x-z)}{x^2} \frac{z(4y-z)}{y^2},
\end{align*}
where we calculated the largest root of the quadratic equation $Q(x,y,z,x,y,t)=0$ with respect to $t$. 
Note that $4x-z \ge 0$ and $4y-z \ge 0$ since $P(x,z,x) \ge 0$ and $P(y,z,y) \ge 0$.
Both the numerator and the denominator are positively homogeneous of degree $5$. Thus we can divide them both by $z^5$ and rename the variables. We will use the same names and will keep in mind that now the range of the variables is not 
bounded from above. (In particular, this makes $z=1$.)  After the change of 
variables  $a:=4x-1 \ge 0 \mbox{ and } b:=4y-1  \ge 0$,
the expression simplifies to 
\begin{align}
\label{2026-01-17-alg}
 \frac{4ab(\sqrt{a}+\sqrt{b})^2}{(a+1)^2(b+1)^2} 
= \Big(\frac{2xy(x+y)}{(x^2+1)(y^2+1)}\Big)^2, 
\end{align}
where we changed the variables again to  \(x=\sqrt a \ge 0\) and \(y=\sqrt b \ge 0\). So it suffices to maximize
\[
F(x,y):=\frac{2xy(x+y)}{(x^2+1)(y^2+1)},\qquad x,y \ge 0.
\]
To find the critical points, it is easier to consider
\[
\Phi(x,y)=\ln F(x,y)
=\ln 2+\ln x+\ln y+\ln(x+y)-\ln(x^2+1)-\ln(y^2+1)
\]
with partial derivatives:
\[
\frac{\partial \Phi}{\partial x}=\frac1x+\frac1{x+y}-\frac{2x}{x^2+1}=0
\,\,\, \mbox{ and } \,\,\,
\frac{\partial \Phi}{\partial y}=\frac1y+\frac1{x+y}-\frac{2y}{y^2+1}=0.
\]
These two equations simplify to
\[
x^2y=2x+y \mbox{ and }
y^2x=2y+x. 
\]
The only non-negative solutions to the system are $(0,0)$ and $(\sqrt{3}, \sqrt{3})$.
On the boundary, \(F(x,y)\to 0\) as \(x\to 0^+\) or \(y\to 0^+\).  
Also, if \(x\to\infty\) with \(y\) fixed then
\[
F(x,y)\sim \frac{2x^2y}{x^2(y^2+1)}=\frac{2y}{y^2+1}\le 1,
\]
and similarly if \(y\to\infty\). Therefore the interior critical point gives
the global maximum. (In addition, the Hessian of $F(x,y)$ is degative definite at $(\sqrt{3}, \sqrt{3})$.) Evaluating at \(x=y=\sqrt3\), gives $F(\sqrt{3}, \sqrt{3}) = 3\sqrt{3}/4$. The square of this number is the maximum of \eqref{2026-01-17-alg} and that of \eqref{2026-10-17-sup}.
\end{proof}

\subsection{The supremum of $R$ on the boundary}
\label{subsec2.3}

We need to investigate the ratio $R$ on the boundary of the domain $D$, given that $|a_{ij}|\not=1$ for all $ij = 12$, $13$, $14$, $23$, $24$, $34$. The boundary is charactarized by the condition that $A$ is a positive semidefinite correlation 
matrix $\det A = 0$. We begin with a rational parametrization of the boundary. We chose four unit vectors in $\R^3$ and let $A$ be their Gram matrix. Since the Gram matrix is invariant under rotations in $\R^3$, we can assume that $u_1:=(1,0,0)$ and then rotate again around $u_1$, until $u_2$ lies in the $xy$-plane, that is
$$
u_2:=\frac{1}{d_2}(1-r^2,\ 2r,\ 0), \mbox{ where } d_2 = 1+r^2 \mbox{ for } r \in \R.
$$
Then we use stereographic projection from the plane onto the sphere $S^2$ to define the other two unit vectors
\[
u_3:=\frac{1}{d_3} (1-p^2-q^2,\ 2p,\ 2q), \mbox{ where } d_3 = 1+p^2+q^2 
\mbox{ for } p,q \in \R
\]
and
\[
u_4:=\frac{1}{d_4}(1-s^2-t^2,\ 2s,\ 2t), \mbox{ where } d_4 = 1+s^2+t^2 
\mbox{ for } s,t \in \R.
\]
Letting $a_{ij}:=\langle u_i,  u_j \rangle$, we obtain
\[
a_{12}=\frac{1-r^2}{d_2},\,\,\,\,
a_{13}=\frac{1-p^2-q^2}{d_3},\,\,\,\,
a_{14}=\frac{1-s^2-t^2}{d_4},
\]
\[
a_{23}=\frac{(1-r^2)(1-p^2-q^2)+4rp}{d_2d_3},\,\,\,\,
a_{24}=\frac{(1-r^2)(1-s^2-t^2)+4rs}{d_2d_4},
\]
\[
a_{34}=\frac{(1-p^2-q^2)(1-s^2-t^2)+4ps+4qt}{d_3d_4}.
\]
Since $A$ is a Gram matrix using four linearly dependent unit vectors, $A$ is positive semidefinite correlation matrix with 
$\det A = 0$. This parametrization covers every such matrix, except the cases when $u_2,u_3$ or $u_4$ is equal to $(-1,0,0)$. This situation is covered when the corresponding parameters approach infinity and this situation appears 
naturally in the course of investigations below. 

Plugging $A$ into the ratio gives $R(A) =: N/D$, where $N$ and $D$ factor as
\begin{align*}
N&:= (s^2 + t^2 + 1)^2\cdot((pr+1)^2 + q^2r^2)\cdot((p- r)^2 + q^2)\cdot(pt - qs)^2\cdot t^2, \\
D&:= ((rs+1)^2 + r^2t^2)\cdot(s^2 + t^2)\cdot((ps+qt+1)^2+(pt-qs)^2) \\
& \hspace{0.5cm} \cdot((p-s)^2 + (q - t)^2)\cdot(p^2 + q^2)\cdot((r-s)^2 + t^2),
\end{align*}
where each factor is a sum of squares. Let $N_1$, $N_2$, $N_3$, $N_4$, and $N_5$ be the five factors in the numerator in their respective order. The numerator is zero when
\begin{align*}
r\not=0, \,\, p=-1/r, \,\, q=0, \mbox{ or }  p=r, \,\, q=0, \mbox{ or } pt=qs, \mbox{ or } t=0.
\end{align*}
Denote the different factors in the denominator of $R$ by $D_1,\ldots, D_6$ in their respective order. The denominator is zero when one or more of the following conditions is satisfied
$$
\begin{aligned}
D_1=0 &\iff t=0,\ s=-\frac1r\ \ (r\neq 0),\\
D_2=0 &\iff s=0,\ t=0,\\
D_3=0 &\iff pt=qs,\ ps+qt=-1,\\
D_4=0 &\iff s=p,\ t=q,\\
D_5=0 &\iff p=0,\ q=0,\\
D_6=0 &\iff r=s,\ t=0.
\end{aligned}
$$
In Appendix A, we show that whenever the denominator converges to zero, the possible limits of the  
ratio $R$ are in all but one cases less than or equal to $1$ and in the remaining case it is less than or equal to $27/16$. 
If only the numerator is zero, then $R=0$ and there is nothing to show. Thus, we can assume that we are at a point where neither $N$ nor $D$ is zero. In that case, we look for the critical points of $R$ by taking the logarithm of the ratio and finding all five partial derivatives. We multiply each partial derivative by the common denominator 
$$
CD:=N_1 \cdots N_5 \cdot D_1 \cdots D_6
$$
in order to convert each partial derivative into a polynomial. Then we find the Gr\"obner basis of the five polynomials and $1-yCD=0$. The last condition ensures that the polynomial $CD$ stays non-zero. The Gr\"obner basis with respect to the $lex$ order of the variables $y,p,q,r,s,t$ gives, the details are omitted, the following families of solutions
\begin{align*}
&\{(s,t,r,q,p)\in\mathbb{R}^5: s=0, t\neq \pm 1, (r,q,p)=(0,0,\pm 1)\}, \\
&\{(s,t,r,q,p)\in\mathbb{R}^5: s=0, t=\pm 1, r=0, p^2+q^2=1 \},\\
&\left\{(s,t,r,q,p)\in\mathbb{R}^5: s=0, t=\pm 1, q=0, r=\dfrac{p-1}{p+1} \right\},\\
&\left\{(s,t,r,q,p)\in\mathbb{R}^5: s=0, t=\pm 1, q=0, r=\dfrac{1+p}{1-p} \right\},\\
&\{(s,t,r,q,p)\in\mathbb{R}^5: s=0, t=\pm 1, (p,q,r)=(0,\pm 1,\pm 1) \}, \\
&\{(s,t,r,q,p)\in\mathbb{R}^5: s^2+t^2=1, s\neq 0, (p,q,r)=(0,\pm 1,\pm 1)\}.
\end{align*}
It is straightforward to verify that on all these families, the value of the ratio $R$ is one. This shows that $R$ does not attain its supremum at a critical point. Thus $R$ must approach its supremum when one or more of its variables approach infinity.

Appendix B, goes over all proper subsets of the variables $\{p,q,r,s,t\}$ and shows that when the variables in that subset go to infinity, while the rest stay bounded, then the limit of $R$ is $0$ in $15$ cases, in $13$ cases it is bounded above by $1$, and in $2$ cases it is bounded above by $(3+2\sqrt{2})/4$.  Thus, it only remains to examine the situation when all variables approach infinity. If they approach infinity at different rates, then the situation easily reduces to one of those considered in Appendix B (just consider the subset of variables that approach infinity at the maximal rate).  In order to avoid introducing new letters, we make the following replacements in $R$ and let $\lambda$ approach infinity
$$
p \to \lambda p, \,\, q \to \lambda q,  \,\, r \to \lambda r,  \,\, s \to \lambda s, \,\, t \to \lambda t.
$$
Thus in the limit the same variables $p,q,r,s,t$ appear and we may assume that they are all non-zero numbers, otherwise we are actually dealing with a subset of the variables, a case considered in Appendix B. A straightforward calculation shows 
\begin{align}
\label{2026-01-23-final}
\lim_{p,q,r,s,t \to \infty} R = \frac{(pt-qs)^2t^2}
{(p^2+q^2)(s^2+t^2)((p-s)^2+(q-t)^2)} \cdot \frac{(r-p)^2+q^2}{(r-s)^2+t^2}.
\end{align}

\begin{lemma}
\label{2026-01-24-lem}
The maximum of \eqref{2026-01-23-final} over all non-zero reals $p,q,r,s,t$ is $27/16$.
\end{lemma}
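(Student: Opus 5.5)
The plan is to read \eqref{2026-01-23-final} geometrically. Put $O=(0,0)$, $P=(p,q)$, $S=(s,t)$ and $X=(r,0)$ in the plane. Then $(pt-qs)^2=|P\times S|^2$, $p^2+q^2=|P|^2$, $s^2+t^2=|S|^2$, $(p-s)^2+(q-t)^2=|P-S|^2$, $(r-p)^2+q^2=|X-P|^2$ and $(r-s)^2+t^2=|X-S|^2$. The expression is invariant under a common scaling and under the reflection $(q,t)\mapsto(-q,-t)$. It is \emph{not} rotation invariant, because $t^2$ and $X$ refer to the horizontal axis.

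\textbf{Step 1 (triangle $OPS$).} Since $|P\times S|=|P||S|\sin\angle POS$, the law of sines $|S|/|P-S|=\sin\angle OPS/\sin\angle POS$ gives
\[
\frac{(pt-qs)^2t^2}{|P|^2|S|^2|P-S|^2}=\sin^2(\angle OPS)\,\frac{t^2}{|S|^2}=\sin^2(\angle OPS)\,\sin^2\varphi ,
\]
where $\varphi$ is the polar angle of $S$. So the whole expression becomes
\[
\sin^2(\angle OPS)\,\sin^2\varphi\cdot \frac{|XP|^2}{|XS|^2}.
\]

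\textbf{Step 2 (eliminate $r$).} For fixed $P,S$, maximize $h(r)=|XP|^2/|XS|^2$ over $X$ on the horizontal axis. The quotient $h(r)$ is a ratio of two monic quadratics in $r$, so $h(r)\to 1$ as $|r|\to\infty$, and $h'(r)=0$ reduces to a quadratic equation in $r$. Equivalently, the supremum is the largest $k^2$ for which the Apollonius circle $\{|XP|=k|XS|\}$ still meets the axis. I would compute this closed form, $\max_r h=M(p,q,s,t)$, by solving for the tangency condition. The nonvanishing of the variables only removes a lower-dimensional set, so passing to the supremum over all reals is harmless.

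\textbf{Step 3 (reduce to two variables).} Use the scaling to normalize $|S|=1$, so $S=(\cos\varphi,\sin\varphi)$. Parametrize $P$ by its polar angle $\psi$ and its modulus $\rho$. Then $\sin\angle OPS$ and $M$ are explicit in $(\rho,\psi,\varphi)$, and we have a function of three variables to maximize. I would first optimize in $\rho$ for fixed angles; I expect the optimum to satisfy a clean relation in the triangle, such as the foot of the Apollonius tangency coinciding with a special point. After that, I would set up the two-variable critical point equations in $(\psi,\varphi)$.

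I expect the optimum to be symmetric, with a $60^\circ$/$30^\circ$-type configuration: note $27/16=(3\sqrt3/4)^2$, echoing $F(\sqrt3,\sqrt3)$ in the preceding theorem. Finally, I would check the boundary behaviour ($\rho\to0,\infty$, $\varphi\to0$, degenerate triangles) to confirm that it gives values at most $1$.

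\textbf{Main obstacle.} The hard step is the joint optimization after Step 2. The closed form of $M$ involves a square root, analogous to $t_+$ earlier. My first attempt would be the substitution used in the proof of the preceding theorem: square-root variables that rationalize the discriminant. This should reduce the objective to a product of the form $\bigl(2xy(x+y)/((x^2+1)(y^2+1))\bigr)^2$, or something close to it. Then the maximum $27/16$ at $x=y=\sqrt3$ can be quoted directly from that theorem.

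If this reduction fails, the fallback is a Gr\"obner basis computation on the logarithmic partial derivatives of the rational function \eqref{2026-01-23-final}, in the same way the critical points of $R$ were found above. One would then evaluate the expression at the finitely many critical families.
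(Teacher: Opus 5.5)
Your Step 1 is correct and a nice repackaging via the law of sines in triangle $OPS$. Step 2 is the same elimination of $r$ that the paper does with a discriminant; your Apollonius-circle version is equivalent.

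\textbf{The gap.} The proposal stops exactly where the proof has to happen. You never compute $\max_r h$, and Step 3 contains no inequality at all. It offers only an expected clean optimum in $\rho$, a guessed $60^\circ/30^\circ$ configuration, and the hope that a square-root substitution ``should'' produce $\bigl(2xy(x+y)/((x^2+1)(y^2+1))\bigr)^2$, with no reason given. The Gr\"obner fallback does not close this either. On a non-compact domain, a list of critical points proves nothing without separately treating every way $(p,q,r,s,t)$ can degenerate, and you only announce that treatment. So neither the bound $\le 27/16$ nor its attainment is established.

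\textbf{The missing idea.} You correctly noticed that the expression is not rotation invariant; what is missing is a way to remove that dependence on the axis.
\begin{enumerate}
\item Carrying out Step 2 gives $\max_r |XP|^2/|XS|^2=(|PS|+|P\bar S|)^2/(4t^2)$, where $\bar S=(s,-t)$ is the mirror image of $S$ in the axis.
\item Since $t^2=|S|^2\sin^2\varphi$, the factor $\sin^2\varphi$ from your Step 1 cancels. The law of sines then turns the expression into
\[
\frac{\sin^2\angle POS}{4}\Bigl(1+\frac{|P\bar S|}{|PS|}\Bigr)^2 .
\]
\item The axis now enters only through $|P\bar S|$. The paper bounds it by the triangle inequality $|P\bar S|\le |P|+|S|$, with equality when $O$ lies on the segment $P\bar S$.
\item What remains depends only on the shape of triangle $OPS$. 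The paper forces $|P|=|S|$ via $2(a^2+1-2ac)-(1-c)(a+1)^2=(1+c)(a-1)^2$. It then maximizes $\frac14(2-y^2)(y+\sqrt2)^2$ with $y=\sqrt{1-\cos\angle POS}$.
\end{enumerate}

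\textbf{In your geometric language.} With angles $\hat O,\hat P,\hat S$ of triangle $OPS$, the law of sines gives
\[
\frac{|OP|+|OS|}{|PS|}=\frac{\cos\frac{\hat P-\hat S}{2}}{\sin\frac{\hat O}{2}}\le \frac{1}{\sin\frac{\hat O}{2}} .
\]
Hence the expression is at most $\cos^2\frac{\hat O}{2}\bigl(1+\sin\frac{\hat O}{2}\bigr)^2$. This is at most $27/16$, with the maximum at $\sin\frac{\hat O}{2}=\frac12$, i.e.\ an equilateral triangle $OPS$. Equality is attained, for example, at $P=(1,\sqrt3)$, $S=(-1,\sqrt3)$, $r=-2$, where all five variables are nonzero; this is consistent with your $60^\circ/30^\circ$ guess, since $\varphi=120^\circ$ there.
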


\begin{proof}
Since the variables are independent, we maximize the second fraction with respect to $r$ first. That is, we 
are looking for the largest $m$, such that the following quadratic equation in $r$ has a real solution
$$
(r-p)^2+q^2 = m((r-s)^2+t^2) \mbox{ or } (1 - m)r^2 + 2(ms - p)r + p^2 + q^2 - m(s^2 + t^2) =0.
$$
This happens when its discriminant is nonnegative:
$$
4(ms-p)^2-4(1-m)(p^2+q^2-m(s^2+t^2))\ge0 
$$
and this is a quadratic inequality in $m$:
$$
-t^2m^2 + ((p-s)^2  + q^2 + t^2)m - q^2 \ge 0.
$$
So the largest possible \(m\) is the larger root:
\begin{align*}
m &=\frac{(p-s)^2+q^2+t^2+\sqrt{((p-s)^2+(q-t)^2)((p-s)^2+(q+t)^2)}}{2t^2} \\
&= \frac{1}{4t^2} (\sqrt A+\sqrt B)^2,
\end{align*}
where for short, we defined
\[
A:=(p-s)^2+(q-t)^2 \,\, \mbox{ and } \,\, 
B:=(p-s)^2+(q+t)^2.
\]
Substituting $m$ in \eqref{2026-01-23-final} eliminates the variable $r$ and we are left to maximize
\begin{align}
\label{2026-01-23-expr}
\frac{1}{4} \frac{(pt-qs)^2}{(p^2+q^2)(s^2+t^2)}
\cdot
&\frac{(\sqrt A+\sqrt B)^2}{A} 
= \frac{1}{4} \frac{(pt-qs)^2}{(p^2+q^2)(s^2+t^2)}
\cdot
\left(1+\sqrt \frac{B}{A} \right)^2.
\end{align}

Let $a:=\sqrt{p^2+q^2}$ and  $b:=\sqrt{s^2+t^2}$.
The signed area of the parallelogram determined by the vectors $(p,q)$ and $(s,t)$ is
\[
\left|
\begin{array}{cc}
p & s \\
q & t
\end{array}
\right|
= pt - qs = ab \sin \theta,
\]
where \(\theta\) is the angle between $(p,q)$ and $(s,t)$. By the triangle inequality 
$$
\sqrt B=\|(p,q) - (s, -t) \|\le a+b,
$$
while $A=\|(p,q)- (s,t)\|^2 = a^2+b^2-2ab\cos\theta$.
Letting \(c=\cos\theta\in(-1,1)\) and substituting everything into \eqref{2026-01-23-expr}, we get the upper bound
\[
\frac{1-c^2}{4}\left(1+\frac{a+b}{\sqrt{a^2+b^2-2abc}}\right)^2.
\]
(Note that if $c = \pm 1$, then \eqref{2026-01-23-expr} is zero and there is nothing to show.)
Dividing the numerator and the denominator of the 
 fraction in the parenthesis by $b$, we can assume that $b=1$. So we need to maximize
\[
\frac{1-c^2}{4}\left(1+\frac{a+1}{\sqrt{a^2+1-2ac}}\right)^2 \mbox{ over } a>0,\ -1<c<1.
\]
This expression is bounded above by
$$
\frac{1-c^2}{4}\left(1+\sqrt{\frac{2}{1-c}}\right)^2
$$
as shown by the identity
$$
2(a^2+1-2ac)-(1-c)(a+1)^2
=(1+c)(a-1)^2\ge 0
$$
and equality between the two is attained at $a=1$.
Letting \(y=\sqrt{1-c} \in(0,\sqrt2)\), removes the square root and the new function to maximize is
$$
\frac{y^2(2-y^2)}{4}\left(1+\frac{\sqrt2}{y}\right)^2
=\frac{1}{4}(2-y^2)(y+\sqrt2)^2.
$$
The derivative factors as $(y+\sqrt{2})^2(1/\sqrt{2}-y)$.
Thus the maximum is attained at $y=1/\sqrt{2}$ with maximal value $27/16$.
\end{proof}

\section{Appendix A}

In this section, we investigate the possible limits of the ratio $R$ when the factors in the denominator go to zero one by one, in pairs, in triples, and so on.

\smallskip

\noindent
{Case 1.} Only $D_1=(rs+1)^2 + r^2t^2=0$, that is $t=0,\ s=-1/r$, $r\neq 0$.
Let $t=\e T$ and $s=-\frac1r+\e S$, for some $T^2+S^2 \not=0$, and simplify the ratio by cancelling $\e^2$ and $r^6$ from the numerator and denominator. Then, we obtain 
$$
\lim_{\e \to 0^+} R = \frac{q^2 T^2}{(p^2+q^2)(S^2+T^2)} \le 1
$$
where we used that $D_5 = p^2+q^2 \not=0$.

\smallskip

\noindent
{Case 2.} Only $D_2=s^2+t^2=0$, that is $s=t=0,$.
Let $s=\e S$ and $t=\e T$, with $S^2+T^2 \not= 0$. After factorization, the numerator has a factor $\e^4$, while the denominator has a factor $\e^2$, the rest of the factors do not converge to $0$. Thus, $\lim_{\e \to 0^+} R=0$.

\smallskip

\noindent
{Case 3.} Only $D_3=(ps+qt+1)^2+(pt-qs)^2=0$, that is $pt=qs$ and $ps+qt=-1$.
Since $D_5=p^2+q^2\neq 0$, the conditions imply
\[
(s,t)=-\frac{1}{D_5}(p,q) \mbox{ and we perturb } (s,t)=-\frac{1}{D_5}(p,q) +\e(S,T)
\]
with $S^2+T^2 \not=0$.
After simplifying and factoring, one cancels $\e^2$ and $(p^2 + q^2)^2$  from the numerator and denominator. Then, we obtain 
$$
\lim_{\e \to 0^+} R =\frac{q^2}{p^2 + q^2} \frac{(S q - T p)^2}{(S^2 + T^2)(p^2 + q^2)} \le 1
$$
by the Cauchy-Schwartz inequality.

\smallskip

\noindent
{Case 4.} Only $D_4=(p-s)^2+(q-t)^2=0$, that is $s=p$ and $t=q$.
Let $s=p+\e S$ and $t=q+\e T$ for some $S^2+T^2 \not= 0$. After cancelling $\e^2$, we obtain
\[
\lim_{\e\to 0} R =
\frac{q^2}{p^2+q^2}\cdot
\frac{(q S - p T)^2}{(S^2+T^2)(p^2+q^2)} \le 1,
\]
by the Cauchy-Schwartz inequality and using that $D_5=p^2+q^2\neq 0$.

\smallskip

\noindent
{Case 5.} Only $D_5=p^2+q^2=0$, that is $p=q=0$.
Let $p=\e P$ and $q=\e Q$, for some $P^2+Q^2 \not=0$. After cancelling $\e^2$, we obtain
\[
\lim_{\e\to 0}  R = \frac{(Pt - Qs)^2}{(s^2 + t^2)(P^2 + Q^2)} \cdot
\frac{(s^2 + t^2 + 1)^2r^2t^2}{(s^2 + t^2)((r-s)^2 + t^2)(r^2t^2 + (1 + rs)^2)} \le 1.
\]
The first fraction is bounded above by one by the Cauchy-Schwartz inequality, while the second fraction is bounded above by one since the difference between its denominator and numerator is the perfect square
$$
((r^2-rs-1)(s^2+t^2)+rs)^2.
$$

\smallskip

\noindent
{Case 6.} Only $D_6=(r-s)^2+t^2=0$, that is $s=r$ and $t=0$.
Let $s=r+\e S$ and $t=\e T$, for some $S^2+T^2 \not=0$ and after cancelling $\e^2$ from the numerator and the denominator, we get 
$$
\lim_{\e\to 0} R =
\frac{q^2}{p^2+q^2}\cdot \frac{T^2}{S^2+T^2} \le 1,
$$
where we also used that $D_5 \not=p^2+q^2=0$.

We now look into the cases when more than one multiple of the denominator approach zero. Many of the case are incompatible, so we mention only the compatible ones.

\smallskip

\noindent
{Case 7.} Only $D_1=D_3=0$. The numerator is $O(\e^6)$, while the denominator is $O(\e^4)$, showing that $\lim_{\e\to 0} R = 0$. The case $D_1=D_4=0$ is equivalent.

\smallskip

\noindent
{Case 8.} Only $D_1=D_5=0$, then let $p = \e P$, $q = \e Q$, $s = -1/r + \e S$, $t = \e T$, with $P^2+Q^2 \not= 0$ 
and $S^2+T^2 \not= 0$. After cancelling $\e^4$ we obtain
\begin{align*}
\lim_{\e\to 0} R =
\frac{Q^2}{P^2+Q^2}\cdot \frac{T^2}{S^2+T^2} \le 1.
\end{align*}

\smallskip

\noindent
{Case 9.} Only $D_2=D_4=0$, then let $p = \e P$, $q = \e Q$, $s = \e S$, $t = \e T$ with $P^2+Q^2 \not= 0$ 
and $S^2+T^2 \not= 0$. After cancelling $\e^6$ we obtain
$$
\lim_{\e\to 0} R =
\frac{T^2}{T^2+S^2}\cdot \frac{(PT-QS)^2}{(P^2+Q^2)((P-S)^2  + (Q-T)^2)} \le 1,
$$
where the first fraction is clearly not bigger than one. The second is also not bigger than one, since the difference between its denominator and numerator is the perfect square
$$
(P^2 - PS + Q^2 - QT)^2.
$$
The cases $D_2=D_5=0$ or $D_4=D_5=0$ or $D_2=D_4=D_5=0$ or $D_4=D_5=D_6=0$ are equivalent.

\smallskip

\noindent
{Case 10.} Only $D_2=D_6=0$, then let $r = \e R$,  $s = \e S$, $t = \e T$ with $S^2+T^2 \not= 0$. After cancelling $\e^4$ we obtain
$$
\lim_{\e\to 0} R =
\frac{(Sq - Tp)^2}{(S^2 + T^2)(p^2 + q^2)}\cdot \frac{T^2}{(R-S)^2 + T^2} \le 1.
$$

\smallskip

\noindent
{Case 11.} Only $D_3=D_6=0$, then let $p = -1/s + \e P$, $q=\e Q$, $r = s + \e R$. The numerator is $O(\e^2)$, while the denominator is $O(1)$, showing that $\lim_{\e\to 0} R = 0$.

\smallskip

\noindent
{Case 12.} Only $D_4=D_6=0$, then let $p = s + \e P$, $q=\e Q$, $r = s + \e R$, $t=\e T$. The numerator is $O(\e^6)$, while the denominator is $O(\e^4)$, showing that $\lim_{\e\to 0} R = 0$.

\smallskip

\noindent
{Case 13.} Only $D_5=D_6=0$, then let $p = \e P$, $q=\e Q$, $r = s + \e R$, $t=\e T$ with $P^2+Q^2 \not= 0$ 
and $R^2+T^2 \not= 0$. After cancelling $\e^4$ we obtain
\begin{align*}
\lim_{\e\to 0} R =
\frac{Q^2}{P^2+Q^2}\cdot \frac{T^2}{R^2+T^2} \le 1.
\end{align*}

\smallskip

\noindent
{Case 14.} Only $D_2=D_4=D_6=0$, then let $p = \e P$, $q=\e Q$, $r =  \e R$, $s=\e S$, $t=\e T$ with 
$P^2+Q^2 \not= 0$, $S^2+T^2 \not= 0$ and $R^2+T^2 \not=0$. After cancelling $\e^8$ we obtain
\begin{align*}
\lim_{\e\to 0} R =
\frac{(PT - QS)^2T^2}{(P^2 + Q^2)(S^2 + T^2)((P-S)^2 + (Q - T)^2)} \cdot \frac{(R-P)^2 + Q^2}{(R-S)^2 + T^2}.
\end{align*}
This ratio is identical to \eqref{2026-01-23-final} and Lemma~\ref{2026-01-24-lem} shows that its supremum is $27/16$.
Another way to see it is to note that in this case $a_{ij} \to 1^-$ for all $ij = 12$, $13$, $14$, $23$, $24$, $34$ and this situation was investigated in Subsection~\ref{subsec2.2}, where the derivation did not use that $\det A \not= 0$.
The cases $D_2=D_5=D_6=0$ or $D_2=D_4=D_5=D_6=0$ are the same.

\section{Appendix B}

In this section, we examine the different ways in which the point $(p,q,r,s,t)$ may go to infinity. For each {\it proper} subset 
$S$ of variables, we let the variables in the subset approach infinity, while the rest stay bounded. (The case when all variables approach infinity, is considered in the body of the paper.) We state what the limit of $R$ in each case is. Without loss of generality, we may assume that the variables approach infinity at the same rate. Otherwise, just consider the variables from $S$ that go to infinity at the maximal rate. Say $S=\{p,q,r,s\}$, in order to avoid introducing new letters, we make the following replacements in $R$ and let $\lambda$ approach infinity
$$
p \to \lambda p, \,\, q \to \lambda q,  \,\, r \to \lambda r,  \,\, s \to \lambda s.
$$
Thus in the limit the same variables $p,q,r,s$ appear and we may assume that they are all non-zero numbers, otherwise we are actually dealing with a subset of $S$.

It is immediate to check that for the subsets
\begin{align*}
&\{s\}, \{t\},  \\
& \{p,r\},  \{p,s\},  \{q,r\}, \{q,s\}, \{q,t\}, \{r,s\}, \{s,t\}, \\
& \{p,q,r\}, \{p,q,s\}, \{p,r,s\}, \{q,r,s\},\{q,r,t\}, \\
&\{p,q,r,s\},
\end{align*}
the limit of $R$ is zero. 

\smallskip

\noindent
{Case 1.} For the subset $\{p\}$, we have
$$
\lim_{p \to \infty} R =\frac{t^2}{s^2 + t^2} \cdot \frac{(s^2 + t^2 + 1)^2r^2t^2}{(s^2 + t^2)((r-s)^2  + t^2)((rs+1)^2 + r^2t^2)} \le 1,
$$
since both fractions are bounded by $1$. For the first one that is obvious, while for the second, one needs to notice that the difference between its denominator and numerator is the perfect square
$$
(r^2s^2 + r^2t^2 - rs^3 - rst^2 + rs - s^2 - t^2)^2.
$$

\smallskip

\noindent
{Case 2.} For the subset $\{q\}$, we have
$$
\lim_{q \to \infty} R =\frac{s^2}{s^2 + t^2} \cdot \frac{(s^2 + t^2 + 1)^2r^2t^2}{(s^2 + t^2)((r-s)^2  + t^2)((rs+1)^2 + r^2t^2)} \le 1,
$$
as above.

\smallskip

\noindent
{Case 3.} For the subset $\{r\}$, we have
$$
\lim_{r \to \infty} R =\frac{t^2}{s^2 + t^2} \cdot \frac{(pt - qs)^2(s^2 + t^2 + 1)^2}{((ps+qt+1)^2+(pt-qs)^2)((p-s)^2 + (q-t)^2)(s^2 + t^2)} \le 1,
$$
where the second fraction is bounded by $1$, since the difference between its denominator and numerator is the perfect square
$$
(p^2s^2 + p^2t^2 - ps^3 - pst^2 + q^2s^2 + q^2t^2 - qs^2t - qt^3 + ps + qt - s^2 - t^2)^2.
$$

\smallskip

\noindent
{Case 4.} For the subset $\{p,q\}$, we have
$$
\lim_{p,q \to \infty} R =\frac{(pt-qs)^2}{(p^2+q^2)(s^2+t^2)} \cdot 
\frac{(s^2+t^2+1)^2r^2t^2}{(s^2+t^2)((r-s)^2+t^2)((rs+1)^2+r^2t^2)}\le 1,
$$
where the first fraction is bounded by $1$ by the Cauchy-Schwartz inequality, while the second was encountered in Case 1.

\smallskip

\noindent
{Case 5.} For the subset $\{p,t\}$, we have
$$
\lim_{p,t \to \infty} R =\frac{p^2}{p^2 + t^2} \le 1.
$$

\smallskip

\noindent
{Case 6.} For the subset $\{r,t\}$, we have
$$
\lim_{r,t \to \infty} R =\frac{p^2r^2}{(p^2 + q^2)(r^2 + t^2)} \le 1.
$$

\smallskip

\noindent
{Case 7.} For the subset $\{p,q,t\}$, we have
$$
\lim_{p,q,t \to \infty} R =\frac{p^2}{p^2 + (q-t)^2} \le 1.
$$

\smallskip

\noindent
{Case 8.} For the subset $\{p,r,t\}$, we have
$$
\lim_{p,r,t \to \infty} R =\frac{t^2(p-r)^2}{(r^2 + t^2)(p^2 + t^2)} \le 1,
$$
since the difference between the denominator and the numerator is $(pr + t^2)^2$.

\smallskip

\noindent
{Case 9.} For the subset $\{p,s,t\}$, we have
$$
\lim_{p,s,t \to \infty} R =\frac{t^2}{s^2 + t^2} \cdot \frac{p^2t^2}{(s^2 + t^2)((p-s)^2 + t^2)} \le 1,
$$
since, in the second fraction, the difference between the denominator and the numerator is $(ps - s^2 - t^2)^2$.

\smallskip

\noindent
{Case 10.} For the subset $\{q,s,t\}$, we have
$$
\lim_{q,s,t \to \infty} R = \frac{t^2}{s^2+t^2} \cdot \frac{q^2s^2}{(s^2 + t^2)((q-t)^2 + s^2)}    \le 1,
$$
since, in the second fraction, the difference between the denominator and the numerator is $(qt - s^2 - t^2)^2$.

\smallskip

\noindent
{Case 11.} For the subset $\{r,s,t\}$, we have
$$
\lim_{r,s,t \to \infty} R = \frac{(pt-qs)^2}{(p^2+q^2)(s^2+t^2)} \cdot \frac{r^2t^2}{(s^2+t^2)((r-s)^2+t^2)}    \le 1,
$$
where the first fraction is bounded by $1$ by the Cauchy-Schwartz inequality, while, in the second, the difference between the denominator and the numerator is $(rs - s^2 - t^2)^2$.

\smallskip

\noindent
{Case 12.} For the subset $\{p,q,r,t\}$, we have
\begin{align}
\label{2026-01-22-lim}
\lim_{p,q,r,t \to \infty} R = \frac{p^2t^2}{(p^2 + (q-t)^2)(p^2 + q^2)} 
\cdot \frac{(p-r)^2 + q^2}{r^2 + t^2} \le \frac{3+2\sqrt{2}}{4}.
\end{align}
To show the inequality requires some work. As stated at the beginning of the section, we can assume that all variables are non-zero. Since they are independent of each other, we begin by maximizing the second ratio. In other words, we are looking for the largest $m$ for which the quadratic equation in $r$ has a real solution
\[
(p-r)^2+q^2 = m(r^2+t^2) \mbox{ or } (1-m)r^2-2pr+(p^2+q^2-mt^2) =0.
\]
For this to have a real solution \(r\), its discriminant must be nonnegative:
\[
4p^2-4(1-m)(p^2+q^2-mt^2)\ge 0 \mbox{ or } -t^2m^2+(p^2+q^2+t^2)m- q^2 \ge 0.
\]
Solving this for \(m\) gives the largest possible value
\begin{align*}
m&=\frac{p^2+q^2+t^2+\sqrt{(p^2 + (q-t)^2)(p^2 + (q+t)^2)}}{2t^2} \\
&= \frac{1}{4t^2} \big(\sqrt{p^2 + (q-t)^2}+\sqrt{p^2 + (q+t)^2}\big)^2.
\end{align*}
After replacing the second fraction in \eqref{2026-01-22-lim} by that $m$, and rearranging, we need to maximize
\begin{align}
\label{2026-01-23-max}
\frac{p^2}{4(p^2+q^2)}\left(1+\sqrt{\frac{p^2 + (q+t)^2}{p^2 + (q-t)^2}}\right)^2.
\end{align}
Using homogeneity, we may assume that $p=1$, so we maximize
$$
\frac{1}{4(1+q^2)}\left(1+\sqrt{\frac{1 + (q+t)^2}{1 + (q-t)^2}}\right)^2.
$$
We may assume \(qt > 0\) (otherwise the square root is smaller), and hence we may assume \(q,t > 0\). Setting
\[
k=\frac{2qt}{1+q^2+t^2}\in[0,1),
\]
we need to maximize
\begin{align}
\label{2026-01-22-max}
\frac{1}{4(1+q^2)}\left(1+\sqrt{\frac{1 + k}{1 - k}}\right)^2
\end{align}
By the AM--GM inequality, we have
\[
k=\frac{2qt}{1+q^2+t^2}\le \frac{2qt}{2t\sqrt{1+q^2}}
=\frac{q}{\sqrt{1+q^2}},
\]
with equality at \(t=\sqrt{1+q^2}\). Since the function $\sqrt{(1+k)/(1-k)}$ is increasing, it is maximized at $k=q/\sqrt{1+q^2}$.
At this point, write \(q=\sinh a\) so \(\sqrt{1+q^2}=\cosh a\), where $a > 0$ since $q>0$. Then
\[
\sqrt{\frac{1+k}{1-k}}=\sinh a + \cosh a = e^a.
\]
Hence, expression \eqref{2026-01-22-max} is bounded above by
\begin{align}
\label{2026-01-22-max1}
\frac{(1+e^a)^2}{4\cosh^2 a} =  \left(\frac{u(u+1)}{u^2+1}\right)^2,
\end{align}
where we put \(u=e^a>0\) and used that \(\cosh a=(u+u^{-1})/2\).
We are finally, down to one-variable maximization problem. 
It is now straightforward to check that the function $u(u+1)/(u^2+1)$ is maximized at $u=1+\sqrt{2}$.
Thus, the maximal value of \eqref{2026-01-22-max1} is $(3+2\sqrt{2})/4$.
Retracing the steps, one can see that equality in \eqref{2026-01-22-lim} is 
attained at 
$$
(p,q,r,t)=(p,p,-\sqrt2p,\sqrt2p), \,\, p\ne 0.
$$

\smallskip

\noindent
{Case 13.} For the subset $\{p,q,s,t\}$, we have
$$
\lim_{p,q,s,t \to \infty} R = \frac{t^2}{s^2+t^2} \cdot \frac{(pt - qs)^2}{(s^2 + t^2)((p-s)^2  + (q-t)^2)}  \le 1,
$$
where, in the second fraction, the difference between the denominator and the numerator is $(ps + qt - s^2 - t^2)^2$.

\smallskip

\noindent
{Case 14.} For the subset $\{p,r,s,t\}$, we have
$$
\lim_{p,r,s,t \to \infty} R = \frac{t^2}{s^2+t^2} \cdot 
\frac{t^2(p-r)^2}{((p-s)^2 + t^2)((r-s)^2 + t^2)}  \le 1,
$$
where, in the second fraction, the difference between the denominator and the numerator is $(pr - ps - rs + s^2 + t^2)^2$.

\smallskip

\noindent
{Case 15.} For the subset $\{q,r,s,t\}$, we have
\begin{align}
\label{2026-01-23-ineq}
\lim_{q,r,s,t \to \infty} R = \frac{t^2s^2}{(s^2 + t^2)(t^2+(r-s)^2)} \cdot
\frac{q^2 + r^2}{(q-t)^2 + s^2} \le \frac{3+2\sqrt{2}}{4},
\end{align}
We now show the inequality. Even though the rational function in \eqref{2026-01-23-ineq} resembles the one in \eqref{2026-01-22-lim}, we could not obtain one from the other by relabeling the variables and substitutions. 
As explained, we may assume that the variables are non-zero. We may change signs between the pairs $\{r,s\}$ and 
$\{q,t\}$ to assume that \(s,t>0\). Since the variables are independent, we start by maximizing the second ratio with respect to $q$. In other words, we are looking for the largest $m$, such that the following quadratic equation in $q$ has a real solution
$$
q^2+r^2 = m((q-t)^2+s^2) \mbox{ or } (1-m)q^2 + 2tmq +r^2-m(t^2+s^2) =0.
$$
This equation has a real solution \(q\) when the discriminant is non-negative:
\[
-s^2m^2+(r^2+s^2+t^2)m-r^2\ge 0.
\]
Hence the largest possible value of $m$ is the larger root:
\begin{align*}
m &= \frac{r^2+s^2+t^2+\sqrt{(t^2+(r+s)^2)(t^2+(r-s)^2)}}{2s^2} \\
&= \frac{1}{4s^2}\big(\sqrt{t^2+(r-s)^2} + \sqrt{t^2+(r+s)^2} \big)^2
\end{align*}
Substituting $m$ into \eqref{2026-01-23-ineq} eliminates $q$ and we need to maximize
\[
\frac{t^2}{4(s^2+t^2)} \left(1+\sqrt{\frac{t^2+(r+s)^2}{t^2+(r-s)^2}} \right)^2.
\]
Now, this can be obtained from \eqref{2026-01-23-max} by a variable substitution and maximized analogously. 

{\bf Acknowledgments: } I am very grateful to the Technical University of Vienna and particularly to the department of Variational Analysis, Dynamics, and Operations Research with head Dr. Aris Daniilidis for their warm hospitality during the work on this paper. I am also thankful to Stoyan Apostolov for the good moments we had together while discussing the problem.

\bibliographystyle{amsplain}

\begin{thebibliography}{3}

\bibitem{Barrett:1989}
W.W.~Barrett, C.R.~Johnson and M.~Lundquist: {Determinantal Formulae for Matrix Completions Associated with Chordal Graphs},
{\it Linear Algebra Appl.} {\bf 121} 265--289 (1989).


\bibitem{Braun:2023}
P.~Braun and H.~Sendov: {On the Hadamard-Fischer inequality, the inclusion-exclusion formula, and bipartite graphs}, {\it Linear Algebra Appl.} {\bf 668} 64--92 (2023).

\bibitem{Choi:2016}
D.~Choi: {Determinantal Inequalities of Positive Definite Matrices}, {\it Math. Inequal. Appl.} {\bf 19}(1) 167--172 (2016).

\bibitem{Dong:2022}
S.~Dong, Q.~Wang and L.~Hou: {Determinantal Inequalities for Block Hadamard Product and Khatri-Rao Product of Positive Definite Matrices}, {\it AIMS Math.} {\bf 7}(6) 9648--9655 (2022).

\bibitem{Fallat:2001}
S.H.~Fallat and C.R.~Johnson: {Mutliplicative Principal-Minor Inequalities for Tridiagonal Sign-Symmetric P-Matrices}, {\it Taiwan. J. Math.} {\bf 5}(3) 655--665 (2001).

\bibitem{Fallat:2003}
S.H.~Fallat, M.I. Gekhtman and C.R.~Johnson: {Multiplicative Principal-Minor Inequalities for Totally Nonnegative Matrices}, {\it Adv. Appl. Math.} {\bf 30} 442--470 (2003).


\bibitem{Fiedler:1964}
M.~Fiedler: {Relations between the diagonal elements of two mutually inverse positive definite matrices}, {\it Czechoslovak Math. J.} {\bf 14}(1) 39--51 (1964).

\bibitem{Fu:2017}
X.~Fu, Y.~Liu and S.~Liu: {Extension of Determinantal Inequalities of Positive Definite Matrices}, {J. Math. Inequalities} {\bf 11}(2) 355--359 (2017).

\bibitem{TracyHall:2008}
H.T.~Hall and C.R.~Johnson: {Bounded Ratios of Products of Principal Minors of Positive Definite Matrices}, {\it arXiv} (2008).

\bibitem{Horn:1990}
{R.A.~Horn and C.R.~Johnson}: {\it Matrix {A}nalysis}, Cambridge University Press, 1990.

\bibitem{Jiang:2019}
X.~Jiang, Y.~Zheng and X.~Chen: {Extending a Refinement of Koteljanskii's Inequality}, {\it Linear Algebra Appl.} {\bf 574} 252--261 (2019).


\bibitem{Johnson:1985}
C.R.~Johnson and W.W.~Barrett: {Spanning-tree Extensions of the Hadamard-Fischer Inequalities},
{\it Linear Algebra Appl.} {\bf 66} 177--193 (1985).

\bibitem{Johnson:1993}
C.R.~Johnson and W.W.~Barrett: {Determinantal Inequalities for Positive Definite Matrices}, {\it Discrete Math.} {\bf 119}(1) 97--106 (1993).

\bibitem{Johnson:2014}
C.R.~Johnson and S.K.~Narayan: {The Koteljanskii Inequalities for Mixed Matrices},
{\it Linear Multilinear Algebra} {\bf 62}(12) 1583--1590 (2014).



\bibitem{Yanxi:2013}
L.~Yanxi: {On Improvements of Fischer’s Inequality and Hadamard’s Inequality for $K_0$-Matrices}, {\it J. Inequal. Appl.} {\bf 2013}, 460 (2013).

\end{thebibliography}

\section{Statements \& Declarations}

The author was partially supported by the Natural Sciences and Engineering Research Council (NSERC) of Canada. (Grant number RGPIN-2020-06425.)

The author has no relevant financial or non-financial interests to disclose.

\end{document}